\newcommand{\RR}{\mathbb R}
\newcommand{\trace}{\mathop{\mathrm{trace}}\nolimits}
\newcommand{\sign}{\mathop{\operatorname{sign}}\nolimits}
\newcommand{\ee}{\mathrm{e}}
\newcommand{\manifold}[1]{\mathcal{#1}}
\newcommand{\M}{\manifold{M}}
\newcommand{\D}{\manifold{D}}
\newcommand{\vect}[1]{\mathrm{#1}} %{\mathbf{#1}}
\newcommand{\x}{\vect{x}}
\newcommand{\y}{\vect{y}}
\newcommand{\va}{\vect{a}}
\newcommand{\vb}{\vect{b}}
\newtheorem{thm}{Theorem}[section]
\newtheorem{prop}[thm]{Proposition}%[section]
\theoremstyle{remark}
\newtheorem{remark}{Remark}[section]
\newtheorem{example}{Example}[section]
\theoremstyle{definition}
\newtheorem{dfn}{Definition}[section]
\newcommand{\ds}{\displaystyle}
\begin{document}

\title[Minimal Lorentz surfaces in $\RR^4_2$ and $\RR^3_1$]
{Relation between the minimal Lorentz surfaces in $\RR^4_2$ and $\RR^3_1$}

\author{Krasimir Kanchev}
\author{Ognian Kassabov}
\author{Velichka Milousheva}

\address {Department of Mathematics and Informatics, Todor Kableshkov University of Transport,
158 Geo Milev Str., 1574, Sofia, Bulgaria}%
\email{kbkanchev@yahoo.com}%

\address{Institute of Mathematics and Informatics, Bulgarian Academy of Sciences,
Acad. G. Bonchev Str. bl. 8, 1113, Sofia, Bulgaria}
\email{okassabov@math.bas.bg}

\address{Institute of Mathematics and Informatics, Bulgarian Academy of Sciences,
Acad. G. Bonchev Str. bl. 8, 1113, Sofia, Bulgaria}
\email{vmil@math.bas.bg}

\subjclass[2010]{Primary 53B30; Secondary 53A35; 53A10; 53B25}%
\keywords{Minimal Lorentz surfaces, pseudo\,-Euclidean space, canonical coordinates, Weierstrass-type representation}%

\begin{abstract}

In this paper we give Weierstrass-type representation formulas for the null curves and for the minimal Lorentz surfaces in the Minkowski 3-space $\RR^3_1$ using real-valued functions.
Applying the Weierstrass-type representations for the null curves, we find a correspondence between the null curves in $\RR^4_2$ and the pairs of null curves in $\RR^3_1$.
Based on  this correspondence, we obtain  a relation  between the minimal Lorentz surfaces in $\RR^4_2$
and the pairs of minimal Lorentz surfaces in $\RR^3_1$.

\end{abstract}

\maketitle

%\tableofcontents

%==============================================================================

\section{Introduction} \label{S:Intro}

The study of minimal surfaces in different  ambient spaces is one of the main topics  in both classical and modern differential geometry 
and has been attracting the attention of many mathematicians for more than two centuries. 
In the last years, the geometry of Lorentz surfaces in pseudo-Euclidean spaces has been of wide interest, since pseudo-Riemannian geometry has many
interesting applications in Physics.

One of the instruments in the study of minimal surfaces in the Euclidean space is the generalized Gauss map \cite{H-O-1}. 
In \cite{Gal-Mart-Mil}, Gálvez et al. obtained a representation for spacelike surfaces in the Minkowski three-dimensional space $\RR^3_1$
using the Gauss map and the conformal structure given by the second fundamental form. Timelike surfaces in $\RR^3_1$
with prescribed Gauss curvature and Gauss map were studied in \cite{Al-Esp-Gal}, where a Kenmotsu-type representation for these surfaces was given.
In \cite{Mag}, M. Magid proved that the Gauss map and the mean curvature of a timelike surface 
satisfy a system of partial differential equations and  found a Weierstrass representation formula for timelike surfaces in $\RR^3_1$. 

A representation of a minimal Lorentz surface was given by M. Dussan and M. Magid in \cite{D-Mag}, where they solved the Bj\"orling problem for timelike surfaces in $\RR^4_2$. The Bj\"orling problem for timelike surfaces in the Lorentz-Minkowski spaces $\RR^3_1$ and $\RR^4_1$ was solved in \cite{Ch-D-Mag} and  \cite{D-Fil-Mag}, respectively. 
 Lorentz surfaces in $\RR^4_2$  were studied in \cite{Bay-Patty} and \cite{Patty} by use of spinors and Lorentz numbers (also known as para-complex, split-complex, double or hyperbolic numbers), and a generalized Weierstrass representation of a minimal Lorentz surface in $\RR^4_2$ was given in \cite{Patty}.
Many other researches have studied spacelike and timelike surfaces in a 3\nobreakdash-dimensional Lorentz space via Weierstrass-type formulas, see e.g. \cite{Cintra-Onnis}, \cite{Lira-Melo-Merc}, \cite{Ship-Pack}. 

On the other hand, the class of minimal surfaces in the Euclidean space $\RR^4$ and the classes of spacelike or timelike surfaces with vanishing mean curvature in the pseudo-Euclidean spaces  $\RR^4_1$ and $\RR^4_2$ are characterized by special systems of PDEs (called natural systems) for the  Gauss curvature $K$ and the normal curvature $\varkappa$. Explicit solutions to these systems of PDEs can be found by use of a special type of Weierstrass representation (in terms of canonical coordinates), which is called canonical Weierstrass-type representation. This problem is solved  for the class of  minimal spacelike surfaces in $\RR^4_1$ in \cite{Kanchev2017}. In the case of a minimal Lorentz surface in $\RR^4_2$, the problem is solved in \cite{K-M-1}.

A correspondence between the solutions of the system of natural PDEs describing the minimal surfaces in  $\RR^4$
and the pairs of solutions to the natural PDEs of the minimal surfaces in $\RR^3$ is
obtained in \cite{Kanchev2014}.
Using this correspondence one can find also a correspondence between the minimal surfaces in $\RR^4$
and the pairs of minimal surfaces in $\RR^3$.
Applying a similar approach, a relation between the 
maximal spacelike surfaces in $\RR^4_2$ and the pairs of maximal spacelike surfaces in $\RR^3_1$ is found in \cite{Kanchev2019}.

The main purpose of the present paper is to find a correspondence of this kind between the minimal Lorentz surfaces in $\RR^4_2$
and the pairs of minimal Lorentz surfaces in $\RR^3_1$.
As the case of Lorentz surfaces is much more complicated than the case of spacelike surfaces, 
we use a different method, applying the theory of the Weierstrass-type representations for the null curves in $\RR^4_2$, which was developed in \cite{Krasimir_Kanchev_2022126017}.
First, we use the main definitions and results from \cite{Krasimir_Kanchev_2022126017} and  obtain a correspondence between the null curves in $\RR^4_2$ and the pairs of null curves in $\RR^3_1$.

It is known, that  there exists a Weierstrass-type representation for the minimal Lorentz surfaces in $\RR^3_1$, 
analogous to the classical Weierstrass representation in $\RR^3$, see \cite{Kond}.
An analogous natural representation for the surfaces in $\RR^4_2$ exists.
In this paper, we apply the method developed in  \cite{K-M-1} and \cite{Krasimir_Kanchev_2022126017} which is based on the Weierstrass-type representation for the null curves.
This method has the obvious advantage that the surfaces and their invariants are 
expressed in terms of real-valued functions. 

In Sections 3 and 4, we recall the Weierstrass-type representation for the null curves and for the minimal Lorentz surfaces in $\RR^4_2$ and give  similar Weierstrass-type representation formulas for the null curves and for the minimal Lorentz surfaces in $\RR^3_1$ using real-valued functions. We also define canonical coordinates and give canonical Weierstrass-type representation formulas for minimal Lorentz surfaces in $\RR^3_1$.  Using the Weierstrass-type representation for the null curves, in Section 5, we give  a relation between the non-degenerate null curves in $\RR^4_2$ and $\RR^3_1$.
In Section 6, we use the canonical isotropic coordinates for the minimal Lorentz surfaces and obtain a relation between the set of minimal Lorentz 
surfaces  in $\RR^4_2$ and the set of ordered pairs of minimal Lorentz surfaces  in $\RR^3_1$.
Finally, we give formulas expressing the Gauss curvature $K$ and the normal curvature $\varkappa$ of a minimal Lorenz surface in $\RR^4_2$  in terms of the Gauss curvatures of the corresponding minimal Lorentz surfaces  in $\RR^3_1$.

% ==============================================================================

\section{Preliminaries} \label{S:Prelim}

 We  denote by $\RR^3_1$, resp. $\RR^4_2$, the standard three-dimensional, resp. four-dimensional pseudo-Euclidean space
with the indefinite scalar product
\begin{equation*}\label{R^3_1-tl}
\va \cdot \vb=-a_1b_1+a_2b_2+a_3b_3,\,
\end{equation*}
resp.
\begin{equation*}\label{R^4_2-tl}
\va \cdot \vb=-a_1b_1+a_2b_2-a_3b_3+a_4b_4\,.
\end{equation*}
 
All considered functions, curves and surfaces in this paper are supposed to be real and smooth.

 Let $\M=(\D ,\x)$ be a Lorentz surface in $\RR^4_2$, where $\D\subset\RR^2$\,, and $\x : \D \to \RR^4_2$ is an immersion.
It is known (see e.g. \cite{Chen-1}) that for every minimal Lorentz surface in $\RR^4_2$ 
there exists a pair $(\alpha_1, \alpha_2)$ of null curves in $\RR^4_2$, such that
$\alpha'^2_1=0$, $\alpha'^2_2=0$, ${\alpha'_1 \cdot \alpha'_2 \neq 0}$\, and
\begin{equation} \label{MinSurf-NullCurves_R42}
	\x(t_1,t_2) = \frac{\alpha_1(t_1)+\alpha_2(t_2)}{2}\,,
\end{equation}
where $(t_1,t_2)$ is a pair of isotropic coordinates for $\M$.
 This implies that with respect to such parametrization the coefficients $E$, $F$, $G$ of the first fundamental form of $\M$ are expressed as follows:
\begin{equation*}\label{EFG-IsoCurvs}
E=G=0 \,; \qquad F = \frac{\alpha'_1 \cdot \alpha'_2}{4} \,.
\end{equation*}
Hence, the change of the sign of one of the isotropic coordinates  $(t_1,t_2)$ changes the sign of
$F$. Consequently, without loss of generality  we may  suppose that $F>0$ (or $F <0$) at each point of $\M$. 

On the other hand, if $\x$ is parametrized by isothermal coordinates, then $\M$ is minimal if and only if 
$\x$ is hyperbolic harmonic (see \cite{Krasimir_Kanchev_2022126017}).
Then, we can introduce a function $\y$, which is hyperbolic harmonic conjugate to $\x$.
The induced minimal Lorentz surface is called conjugate to the given surface.
In isotropic coordinates $(t_1,t_2)$, we have:
\begin{equation} \label{Conj_MinSurf-IsoCurves}
	\y(t_1,t_2) = \frac{\alpha_1(t_1)-\alpha_2(t_2)}{2}\,.
\end{equation}

The null curve $\alpha$ is called \emph{nondegenerate} if ${\alpha''}^2 \neq 0$ everywhere. 
Note that if we make the parameter change  $t=t(s),\, t' \neq 0$\,, in the null curve $\alpha(t)$   
then
${\alpha''_s}^2 = {\alpha''_t}^2 {t'}^4$.
This implies that the degeneracity of a null curve does not depend on the parametrization.
A minimal Lorentz surface $\M$ in $\RR^4_2$ is said to be of \emph{general type}, if
its corresponding null curves $\alpha_1$ and $\alpha_2$ are both nondegenerate
\cite{Kanchev2020}, \cite{Krasimir_Kanchev_2022126017}.
This definition does not depend on the local coordinates and is invariant 
under a motion in $\RR^4_2$.

\begin{dfn}\label{Def-MinSurf_kind123-IsoCurvs} \cite{Kanchev2020}, \cite{Krasimir_Kanchev_2022126017}
 Let $\M$ be a minimal Lorentz surface of general type in $\RR^4_2$ with corresponding null curves 
${\alpha_1}$, ${\alpha_2}$. We say that the surface $\M$ is of
\begin{itemize}
	\item  \emph{first type}, if both vectors ${\alpha''_1}$, ${\alpha''_2}$ 
	are spacelike;
	\item \emph{second type}, if both vectors ${\alpha''_1}$, ${\alpha''_2}$ 
	are timelike;
	\item \emph{third type}, if one of the vectors ${\alpha''_1}$, ${\alpha''_2}$ 
	is spacelike and the other one is timelike.
\end{itemize} 
\end{dfn}

Note that this definition does not depend on the choice of the local coordinates and is 
invariant under a motion in $\RR^4_2$. Moreover, for a surface ot third type
we may suppose ${\alpha''_1}^2>0$ and ${\alpha''_2}^2<0$\,
(otherwise we may renumerate the curves and the parameters).

 A non-degenerate curve $\alpha$ is said to be parametrized by a \emph{natural parameter},
if ${\alpha''}^2=\pm 1$\,.   Since this parameter  plays a role
similar to the role of an arc-length parameter for non-null curves, it is  known in the literature as \textit{pseudo arc-length parameter}  \cite{Vessiot1905,Duggal-Jin}. 
If $t$ is an arbitrary parameter of $\alpha$, then a natural parameter $s$ is given by
\begin{equation*}\label{NatParam_IsoCurv}
s = \int\sqrt[4]{\big|{\alpha''}^2 (t)\big|}\:dt\,.
\end{equation*}

\begin{dfn}\label{Can_Coord-MinLorSurf_R42-IsoCurvs}\cite{Krasimir_Kanchev_2022126017}
If $t_1$, resp. $t_2$ is a natural parameter of $\alpha_1$, resp. $\alpha_2$, then, we say that  $(t_1,t_2)$ 
are \emph{canonical coordinates} of the corresponding minimal Lorentz surface of general type $\M$ in $\RR^4_2$, 
(if $\M$ is of third type we suppose ${\alpha''_1}^2=1$ and ${\alpha''_2}^2=-1$\,).
\end{dfn}
  
 If $t$ and $s$ are two natural parameters of $\alpha$, then
it follows that $t'(s)=\pm 1$\,. Consequently,
\begin{equation}\label{Uniq-NatParam_IsoCurv}
t=\pm s + c\,,\qquad c=\textnormal{const}\,. 
\end{equation}
This shows that the canonical coordinates of a minimal Lorentz surface of general type in $\RR^4_2$
are defined up to numeration, sign and additive constants. For the surfaces of third type the numeration is fixed, since
${\alpha''_1}^2=1$ and ${\alpha''_2}^2=-1$\,.

Note that we can consider $\RR^3_1$ as a subspace of $\RR^4_2$, defined by $x_3=0$. 
So, every object in $\RR^3_1$ can be considered as an object in $\RR^4_2$.
Hence, the above remarks about the existence and uniqueness of the natural parameters of curves
and the canonical coordinates of the surfaces in $\RR^4_2$ hold also for $\RR^3_1$.
Moreover, the minimal Lorentz surfaces in $\RR^3_1$ considered as surfaces in $\RR^4_2$ are always of first type,
since  ${\alpha''}^2 \geq 0$ for the null curves in  $\RR^3_1$.

 The main invariants of a minimal Lorentz surface $\M$ in $\RR^4_2$ are the 
Gauss curvature $K$ and the  curvature of the normal  connection (normal curvature) $\varkappa$. 
The following statement holds true.

\begin{prop} \cite{Kanchev2020} \label{DegP_kind123-K_kappa-tl} 
 A minimal Lorentz surface $\M$ in $\RR^4_2$ is of general type if and only if 
$K^2-\varkappa^2\neq 0$\,. Moreover, $\M$ is of first or second type (resp. of third type)
if and only if $K^2-\varkappa^2>0$\, (resp. $K^2-\varkappa^2<0$\,). 
\end{prop}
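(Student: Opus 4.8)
The plan is to compute everything in the isotropic coordinates $(t_1,t_2)$ furnished by \eqref{MinSurf-NullCurves_R42}, in which the extrinsic geometry of $\M$ is almost diagonal. First I would record from $\x=(\alpha_1(t_1)+\alpha_2(t_2))/2$ the facts $E=G=0$, $F=\alpha'_1\cdot\alpha'_2/4\neq0$, together with the decisive simplification $\x_{t_1t_2}=0$. Hence the normal-valued second fundamental form satisfies $h_{12}=0$, while $h_{11}$ and $h_{22}$ are the normal projections of $\alpha''_1/2$ and $\alpha''_2/2$. Writing $\xi_i$ for the normal component of $\alpha''_i$, I would note that the tangential component of $\alpha''_i$ is a multiple of $\alpha'_i$ (because $\alpha''_i\cdot\alpha'_i=\tfrac12({\alpha'_i}^2)'=0$), and since $\alpha'_i$ is null and orthogonal to $\xi_i$ this yields ${\alpha''_i}^2=\xi_i^2$. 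Thus the nondegeneracy condition ${\alpha''_i}^2\neq0$ says precisely that $\xi_i$ is non-null in the normal plane.

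Next I would feed $h_{11}=\xi_1/2$, $h_{22}=\xi_2/2$, $h_{12}=0$ into the Gauss and Ricci equations for a surface in the flat space $\RR^4_2$. Since $EG-F^2=-F^2$, the Gauss equation gives $K=-(\xi_1\cdot\xi_2)/(4F^2)$. For the normal curvature I would fix a pseudo-orthonormal normal frame $n_1,n_2$ (the normal plane of a Lorentz surface in $\RR^4_2$ is itself Lorentzian, of signature $(1,1)$), compute the shape operators $A_{n_a}$ — each antidiagonal in the null tangent frame because $h_{12}=0$ — and evaluate the commutator $[A_{n_1},A_{n_2}]$. The Ricci equation, normalized by the area element $\sqrt{|EG-F^2|}=|F|$, then produces $\varkappa=\pm D/(4F^2)$, where $D$ is the coefficient of $\xi_1\wedge\xi_2$ relative to $n_1\wedge n_2$.

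The heart of the argument is a single algebraic identity in the Lorentzian normal plane. Expanding $\xi_1,\xi_2$ in the frame $n_1,n_2$ (with $n_1^2=-1$, $n_2^2=1$), a direct computation gives $(\xi_1\cdot\xi_2)^2-D^2=\xi_1^2\,\xi_2^2$; here the Gram determinant enters with the sign opposite to the Euclidean case, which is exactly what makes the two invariants combine. Substituting the formulas for $K$ and $\varkappa$ I would then obtain
\begin{equation*}
K^2-\varkappa^2=\frac{{\alpha''_1}^2\,{\alpha''_2}^2}{16\,F^4}\,.
\end{equation*}
Because $F\neq0$, the prefactor is strictly positive, so $K^2-\varkappa^2$ carries the sign of the product ${\alpha''_1}^2\,{\alpha''_2}^2$. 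The proposition is then immediate: $K^2-\varkappa^2\neq0$ iff both ${\alpha''_i}^2\neq0$, i.e. iff $\M$ is of general type; the product is positive exactly when ${\alpha''_1}^2$ and ${\alpha''_2}^2$ share a sign (first or second type, by Definition \ref{Def-MinSurf_kind123-IsoCurvs}) and negative exactly when their signs differ (third type).

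The main obstacle I anticipate is bookkeeping rather than conceptual: pinning down the normalization and orientation sign of $\varkappa$ in the Ricci computation, where raising indices and the pseudo-orthonormal normalizations $n_a^2=\pm1$ each contribute $\pm1$ factors. Fortunately these signs are harmless, since only $(\xi_1\cdot\xi_2)^2$ and $D^2$ survive into $K^2-\varkappa^2$; likewise the identity ${\alpha''_i}^2=\xi_i^2$, the vanishing of $h_{12}$, and the Gram identity are all sign-robust. It is worth emphasizing that the Lorentzian signature of the normal plane is doing the real work: it is responsible for the minus sign in the Gram identity, which is what pairs with the minus sign in $K^2-\varkappa^2$ to collapse everything to the single product ${\alpha''_1}^2\,{\alpha''_2}^2$, and thereby to produce a genuine trichotomy.
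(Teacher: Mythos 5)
Your proposal is correct, and it takes a genuinely different route from the paper's. The paper itself does not prove the proposition (it is imported from \cite{Kanchev2020}), and the derivation implicit in the paper's own framework is pure Weierstrass algebra: squaring and subtracting the two formulas in \eqref{K_kappa-fgh_MinLorSurf_R42} gives
\begin{equation*}
K^2-\varkappa^2=\frac{16\,g'_1g'_2h'_1h'_2}{f_1^2f_2^2\,(g_1-g_2)^4(h_1-h_2)^4}\,,
\end{equation*}
which, in view of ${\alpha''_i}^2=4f_i^2g'_ih'_i$, of $F=-\tfrac{1}{2}f_1f_2(g_1-g_2)(h_1-h_2)$, and of Proposition \ref{IsoCurv-nondeg_fgh_R42}, is exactly your identity $K^2-\varkappa^2={\alpha''_1}^2{\alpha''_2}^2/(16F^4)$ together with the sign analysis. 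You obtain the same identity intrinsically from the structure equations instead: $\x_{t_1t_2}=0$ kills $h_{12}$; your null-frame argument for ${\alpha''_i}^2=\xi_i^2$ is sound (the coefficient of $\alpha'_2$ in $\alpha''_1$ vanishes because $\alpha''_1\cdot\alpha'_1=0$ and $F\neq 0$); Gauss gives $K=-\xi_1\cdot\xi_2/(4F^2)$; Ricci gives $\varkappa=\pm D/(4F^2)$ (indeed, with $p_a=\tfrac12\,\xi_1\cdot n_a$, $q_a=\tfrac12\,\xi_2\cdot n_a$, the shape operators $A_{n_a}=\frac{1}{F}\left(\begin{smallmatrix}0&q_a\\ p_a&0\end{smallmatrix}\right)$ have diagonal commutator with entries $\pm D/(4F^2)$, and normalization by the area element $|F|$ yields the stated constant); and the signature-$(1,1)$ Gram identity $(\xi_1\cdot\xi_2)^2-D^2=\xi_1^2\xi_2^2$ checks out by direct expansion. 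What your route buys: it is representation-free, manifestly invariant, and isolates the sharp pointwise formula $K^2-\varkappa^2={\alpha''_1}^2{\alpha''_2}^2/(16F^4)$, which is stronger than the proposition and nowhere stated in the paper; what the Weierstrass route buys is a two-line proof once \eqref{K_kappa-fgh_MinLorSurf_R42} is granted. The only genuinely delicate point is the one you flag yourself: the Ricci computation must produce the constant $1/(4F^2)$ exactly, not merely up to sign, since a wrong factor (unlike a wrong sign) would destroy the identity — it does, and this is independently confirmed by the match with \eqref{K_kappa-fgh_MinLorSurf_R42} above. As a by-product, your identity exposes a typo in the paper's Example: the data there give $K^2-\varkappa^2=16/(\sinh(t_1)+\sinh(t_2))^4$, which together with the printed $K$ forces $\varkappa=4\big(1-\sinh(t_1)\sinh(t_2)\big)/(\sinh(t_1)+\sinh(t_2))^3$, whereas the printed formula has $\cosh(t_1)\cosh(t_2)$ in place of $\sinh(t_1)\sinh(t_2)$.
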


%==============================================================================

\section{Weierstrass-type representation for null curves in $\RR^4_2$ and $\RR^3_1$}\label{sect_W-NullCurves_R42_R31}

The null curves in $\RR^4_2$ can be determined by a triple of real functions as follows.
 
\begin{thm}\label{prop-W_IsoCurv_R42} \cite{Krasimir_Kanchev_2022126017} 
Let $\alpha$ be a null curve in $\RR^4_2$ such that $\alpha'=(\xi_1,\xi_2,\xi_3,\xi_4)$ satisfies $\xi_1-\xi_2\neq 0$\,. 
Then, $\alpha'$ has the form 
\begin{equation}\label{W_alphap_R42}
\alpha' = f \big(\, g h+1 \,,\, g h-1 \,,\, h - g \,,\, h + g \,\big),
\end{equation}
where the functions $f\neq 0$, $g$, $h$ are uniquely determined by
\begin{equation*}\label{fgh_alphap_R42}
f=\ds\frac{1}{2}(\xi_1-\xi_2)\,; \qquad
g=\ds\frac{\xi_4-\xi_3}{\xi_1-\xi_2}\;; \qquad
h=\ds\frac{\xi_4+\xi_3}{\xi_1-\xi_2}\;.
\end{equation*}
Conversely, let $(f,g,h)$ be a triple of functions,  $f\neq 0$\,.
Then there exists a null curve $\alpha$ in $\RR^4_2$ such that \eqref{W_alphap_R42} holds and
 $\xi_1-\xi_2\neq 0$\,. %\prfend
\end{thm}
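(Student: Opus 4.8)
The plan is to prove both directions by direct computation, using the null condition $\alpha'^2=0$ as the single structural input. For the forward direction, I would begin by writing out this condition explicitly: since the scalar product in $\RR^4_2$ has signature $(-,+,-,+)$, the equation $\alpha'^2=0$ reads $-\xi_1^2+\xi_2^2-\xi_3^2+\xi_4^2=0$, equivalently $\xi_4^2-\xi_3^2=\xi_1^2-\xi_2^2$. With $f,g,h$ defined as in the statement, the crucial step is to compute
\[
gh=\ds\frac{(\xi_4-\xi_3)(\xi_4+\xi_3)}{(\xi_1-\xi_2)^2}=\frac{\xi_4^2-\xi_3^2}{(\xi_1-\xi_2)^2},
\]
and then invoke the null condition to rewrite the numerator as $(\xi_1-\xi_2)(\xi_1+\xi_2)$, yielding $gh=(\xi_1+\xi_2)/(\xi_1-\xi_2)$. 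From here each of the four components of $f\,(gh+1,\,gh-1,\,h-g,\,h+g)$ reduces to the corresponding $\xi_i$ after a short calculation; for instance the first component equals $\tfrac12(\xi_1+\xi_2)+\tfrac12(\xi_1-\xi_2)=\xi_1$.

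For uniqueness, I would observe that the representation is algebraically invertible. Taking the difference of the first two components recovers $2f=\xi_1-\xi_2$, while the sum and difference of the last two components give $2fh=\xi_4+\xi_3$ and $2fg=\xi_4-\xi_3$. Since $f\neq 0$ is guaranteed by $\xi_1-\xi_2\neq 0$, these relations determine $f,g,h$ uniquely and reproduce the stated formulas.

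For the converse, I would take an arbitrary triple $(f,g,h)$ with $f\neq 0$ and set $\beta:=f\,(gh+1,\,gh-1,\,h-g,\,h+g)$. The key verification is that $\beta^2=0$ identically, independent of $f,g,h$: expanding the indefinite square gives
\[
\beta^2=f^2\bigl[-(gh+1)^2+(gh-1)^2-(h-g)^2+(h+g)^2\bigr],
\]
where the first pair contributes $-4gh$ and the second pair contributes $+4gh$, so they cancel. Defining $\alpha$ as any antiderivative of $\beta$ then produces a null curve with $\alpha'=\beta$, and the first two components differ by $2f\neq 0$, so $\xi_1-\xi_2\neq 0$ as required.

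The computations are elementary throughout; the only point I would emphasize is that the null condition enters asymmetrically in the two directions. In the forward direction it is an external constraint that must be used to simplify $gh$, whereas in the converse it is built into the algebraic form of the four components and holds automatically. This reflects the fact that the assignment $(f,g,h)\mapsto\alpha'$ parametrizes the null cone in $\RR^4_2$ away from the locus $\xi_1=\xi_2$, with the open condition $f\neq 0$ corresponding exactly to $\xi_1-\xi_2\neq 0$.
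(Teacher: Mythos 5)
Your proof is correct: the reduction of $gh$ to $(\xi_1+\xi_2)/(\xi_1-\xi_2)$ via the null condition, the algebraic inversion $2f=\xi_1-\xi_2$, $2fg=\xi_4-\xi_3$, $2fh=\xi_4+\xi_3$ for uniqueness, and the identical cancellation $-(gh+1)^2+(gh-1)^2-(h-g)^2+(h+g)^2=0$ for the converse all check out, and the regularity of $\alpha$ is guaranteed since $\alpha'\neq 0$ follows from $2f\neq 0$. The paper states this theorem by citation to \cite{Krasimir_Kanchev_2022126017} without reproducing a proof, and your direct computation is precisely the standard argument behind this Weierstrass-type parametrization of the null cone, so there is nothing to flag.
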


\begin{remark}\label{rem-W_IsoCurv_R42}
The restriction $\xi_1-\xi_2\neq 0$ in the last theorem is not essential for the geometric properties of the curve,
because every null curve in $\RR^4_2$ with $\xi_1-\xi_2 = 0$\,
can be transformed by a proper motion to a curve satisfying the condition $\xi_1-\xi_2\neq 0$\,.
 
\end{remark}

\smallskip
 Now, consider the null curves in $\RR^3_1$. 
Note that \eqref{W_alphap_R42} implies that $x_3=0$ is equivelent to $g=h$.
So, we obtain the following analogue of Theorem \ref{prop-W_IsoCurv_R42}:

\begin{thm}\label{prop-W_IsoCurv_R31}
Let $\alpha$ be a null curve in $\RR^3_1$, such that $\alpha'=(\xi_1,\xi_2,\xi_3)$ satisfies $\xi_1-\xi_2\neq 0$\,. 
Then, $\alpha'$ has the form: 
\begin{equation}\label{W_alphap_R31}
\alpha' = f \big(\, g^2 + 1 \,,\, g^2 - 1 \,,\, 2g \,\big),
\end{equation}
where $f\neq 0$ and $g$ are functions, given uniquely by
\begin{equation}\label{fg_alphap_R31}
f=\ds\frac{1}{2}(\xi_1-\xi_2)\,; \qquad
g=\ds\frac{\xi_3}{\xi_1-\xi_2} \;.
\end{equation}

 Conversely, let $(f,g)$ be a pair of  functions, $f\neq 0$\,.
Then, there exists a  null curve $\alpha$ in $\RR^3_1$, such that
$\alpha'$ has the form \eqref{W_alphap_R31} and
$\xi_1-\xi_2\neq 0$\,. %\prfend
\end{thm}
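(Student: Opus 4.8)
The plan is to obtain Theorem \ref{prop-W_IsoCurv_R31} as a direct specialization of Theorem \ref{prop-W_IsoCurv_R42}, exploiting the observation recorded just above the statement that, under the inclusion $\RR^3_1\cong\{x_3=0\}\subset\RR^4_2$, the condition $x_3=0$ is equivalent to $g=h$.

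For the forward implication I would regard the null curve $\alpha\subset\RR^3_1$, with $\alpha'=(\xi_1,\xi_2,\xi_3)$, as a null curve in $\RR^4_2$ via the isometric inclusion sending $(\xi_1,\xi_2,\xi_3)$ to the $\RR^4_2$-vector $(\xi_1,\xi_2,0,\xi_3)$; the induced metric on $\{x_3=0\}$ is precisely the $\RR^3_1$ metric, so nullity is preserved. Since $\xi_1-\xi_2\neq0$, Theorem \ref{prop-W_IsoCurv_R42} supplies unique functions $f,g,h$ with $\alpha'=f(gh+1,gh-1,h-g,h+g)$. Reading off the third $\RR^4_2$-component gives $f(h-g)=0$, and as $f\neq0$ this forces $h=g$. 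Substituting $h=g$ collapses \eqref{W_alphap_R42} to $f(g^2+1,g^2-1,0,2g)$; discarding the vanishing third slot returns the claimed form \eqref{W_alphap_R31}. The uniqueness formulas \eqref{fg_alphap_R31} then drop out of those in Theorem \ref{prop-W_IsoCurv_R42}: evaluating the latter with the third $\RR^4_2$-component equal to $0$ and the fourth equal to $\xi_3$ leaves $f=\tfrac12(\xi_1-\xi_2)$ unchanged and sends both $g$ and $h$ to $\xi_3/(\xi_1-\xi_2)$, consistently with $g=h$.

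For the converse, given a pair $(f,g)$ with $f\neq0$, I would feed the triple $(f,g,g)$ into the converse half of Theorem \ref{prop-W_IsoCurv_R42}. This produces a null curve in $\RR^4_2$ whose derivative $f(g^2+1,g^2-1,0,2g)$ has vanishing third coordinate, hence lies in $\{x_3=0\}\cong\RR^3_1$; its restriction is the desired null curve, with $\alpha'$ of the form \eqref{W_alphap_R31} and $\xi_1-\xi_2=2f\neq0$ automatic. A fully self-contained alternative is to argue directly in $\RR^3_1$: the nullity relation $-\xi_1^2+\xi_2^2+\xi_3^2=0$ rewrites as $\xi_3^2=(\xi_1-\xi_2)(\xi_1+\xi_2)$, after which substituting $\xi_1+\xi_2=\xi_3^2/(\xi_1-\xi_2)$ shows that the $f,g$ of \eqref{fg_alphap_R31} reproduce $(\xi_1,\xi_2,\xi_3)$; the converse then reduces to the identity $-(g^2+1)^2+(g^2-1)^2+4g^2=0$ followed by integration of $\alpha'$.

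Every step is elementary, so I expect no genuine difficulty. The one place demanding a little care is the coordinate bookkeeping in the identification $\RR^3_1\cong\{x_3=0\}\subset\RR^4_2$, under which the third $\RR^3_1$-coordinate is carried to the fourth $\RR^4_2$-coordinate, so that the roles of the $g$- and $h$-slots in \eqref{W_alphap_R42} merge exactly when $g=h$.
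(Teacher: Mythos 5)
Your proposal is correct and matches the paper's own derivation: the paper obtains Theorem~\ref{prop-W_IsoCurv_R31} precisely by the remark that in \eqref{W_alphap_R42} the condition $x_3=0$ is equivalent to $g=h$, i.e.\ by the same specialization of Theorem~\ref{prop-W_IsoCurv_R42} under the inclusion $\RR^3_1\cong\{x_3=0\}\subset\RR^4_2$, with the same coordinate bookkeeping you describe. Your supplementary self-contained verification via $\xi_3^2=(\xi_1-\xi_2)(\xi_1+\xi_2)$ and the identity $-(g^2+1)^2+(g^2-1)^2+4g^2=0$ is a correct bonus but not needed.
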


Formulas  \eqref{W_alphap_R31}  and  \eqref{W_alphap_R42} are analogous to the classical Weierstrass formula for minimal surfaces in $\RR^3$. 
So, we  call them the \emph{Weierstrass-type representation} of a null curve in  $\RR^3_1$, resp. $\RR^4_2$,
 \emph{gererated by the pair of functions $(f,g)$}, resp. \emph{the triple of functions $(f,g,h)$}.

 In what follows, we find the transformation formulas under  proper motions in  $\RR^3_1$ for the functions that participate in \eqref{W_alphap_R31}.  
 First, we consider a proper orthochronous motion.

\begin{thm}\label{W-proper_ortoh_move_R31} 
Let $\hat\alpha$ and $\alpha$ be null curves in $\RR^3_1$ 
having Weierstrass-type representations of the form \eqref{W_alphap_R31}.  
Then, the following conditions are equivalent:
\begin{enumerate}
	\item $\hat\alpha$ and $\alpha$ are related by a proper orthochronous motion in $\RR^3_1$ of the type\\
	$\hat\alpha(t)=A\alpha(t)+\vb$, where $A \in \mathbf{SO}^{+}(2,1,\RR)$ and  $\vb \in \RR^3_1$.
	\item The functions in the Weierstrass-type representation formulas for  
		$\hat\alpha$ and $\alpha$ are related by 	
\begin{equation}\label{hatfg_fg-proper_ortoh_move_R31}
\hat f = f(c g + d)^2 \,; 
\qquad
\hat g = \ds\frac{a g + b}{c g + d}\,; 
\qquad 
a,b,c,d \in \RR\,; \;\;
ad-bc = 1 \,.
%\raisetag{-0.8ex}
\end{equation}
\end{enumerate}
\end{thm}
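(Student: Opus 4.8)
The plan is to realize both the Weierstrass data $(f,g)$ and the motion $A$ through the standard double cover $\mathbf{SL}(2,\RR)\to\mathbf{SO}^{+}(2,1,\RR)$, which turns the asserted equivalence into a single equivariance statement. To this end I would attach to each value of the derivative $\alpha'=(\xi_1,\xi_2,\xi_3)$ the symmetric $2\times 2$ matrix
\[
S=\begin{pmatrix} \xi_1+\xi_2 & \xi_3 \\ \xi_3 & \xi_1-\xi_2\end{pmatrix},
\]
whose determinant $\det S=\xi_1^2-\xi_2^2-\xi_3^2$ is exactly $-\alpha'\cdot\alpha'$; thus null directions correspond precisely to rank-one matrices. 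Substituting the representation \eqref{W_alphap_R31} gives the factorization $S=2f\,vv^{\mathsf{T}}$ with $v=(g,1)^{\mathsf{T}}$, so that the pair $(f,g)$ is encoded in $S$, the encoding being made unambiguous by the uniqueness already granted in Theorem \ref{prop-W_IsoCurv_R31}.

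The key observation is that the action $S\mapsto MSM^{\mathsf{T}}$ of $M=\begin{pmatrix}a&b\\c&d\end{pmatrix}\in\mathbf{SL}(2,\RR)$ preserves $\det S$, hence induces a linear map $A=A(M)$ on $\RR^3_1$ lying in $\mathbf{O}(2,1,\RR)$; since $\mathbf{SL}(2,\RR)$ is connected and $M=I$ yields $A=I$, the image lies in the identity component $\mathbf{SO}^{+}(2,1,\RR)$, and $M\mapsto A$ is a surjective homomorphism with kernel $\{\pm I\}$ (the sign ambiguity being harmless, as $\pm M$ give the same $(\hat f,\hat g)$). Because $Mv=(ag+b,\,cg+d)^{\mathsf{T}}$, one computes directly that $MSM^{\mathsf{T}}=2f(cg+d)^2\,\hat v\hat v^{\mathsf{T}}$ with $\hat v=(\hat g,1)^{\mathsf{T}}$ and $\hat g=(ag+b)/(cg+d)$; reading off the scalar factor gives $\hat f=f(cg+d)^2$. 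This is precisely the transformation \eqref{hatfg_fg-proper_ortoh_move_R31}, now matched to the geometric action $A=A(M)$ on $\alpha'$.

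With this dictionary the two implications become short. For $(2)\Rightarrow(1)$ I would take the $M\in\mathbf{SL}(2,\RR)$ supplied by the relation $ad-bc=1$, so that the computed identity $\hat\alpha'=A(M)\,\alpha'$ holds pointwise with $A(M)\in\mathbf{SO}^{+}(2,1,\RR)$; integrating yields $\hat\alpha=A(M)\alpha+\vb$ for a constant $\vb$. For $(1)\Rightarrow(2)$ I would differentiate $\hat\alpha=A\alpha+\vb$ to get $\hat\alpha'=A\alpha'$, use surjectivity of the double cover to write $A=A(M)$ for some $M\in\mathbf{SL}(2,\RR)$, and then invoke the uniqueness of $(f,g)$ in Theorem \ref{prop-W_IsoCurv_R31} to conclude that $(\hat f,\hat g)$ must coincide with the image of $(f,g)$ under \eqref{hatfg_fg-proper_ortoh_move_R31}. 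The only genuinely delicate point is the homomorphism $\mathbf{SL}(2,\RR)\to\mathbf{SO}^{+}(2,1,\RR)$ itself: one must check that its image is \emph{all} of the identity component (the surjectivity needed for $(1)\Rightarrow(2)$) and that every resulting $A$ is orthochronous. Both follow from connectedness of $\mathbf{SL}(2,\RR)$ together with the fact that a continuous homomorphism into a Lie group whose differential at the identity is an isomorphism has open, hence full, image in the identity component. I would isolate this as a preliminary lemma, after which the remainder of the proof reduces to the bookkeeping above.
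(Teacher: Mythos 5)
Your proof is correct and takes essentially the same route as the paper: both realize the spinor double cover $\mathbf{SL}(2,\RR)\to\mathbf{SO}^{+}(2,1,\RR)$ on a $2\times 2$ matrix model of $\RR^3_1$ (the paper uses trace-free matrices with the conjugation action $S\mapsto BSB^{-1}$, you use symmetric matrices with the congruence action $S\mapsto MSM^{\mathsf T}$, but the two models are intertwined by right multiplication with $\left(\begin{smallmatrix}0&1\\-1&0\end{smallmatrix}\right)$, so the difference is cosmetic), and both conclude via kernel $\{\pm I\}$ and image equal to the identity component. If anything, you spell out more than the paper does — the rank-one factorization $S=2f\,vv^{\mathsf T}$ and the explicit derivation of \eqref{hatfg_fg-proper_ortoh_move_R31}, which the paper compresses into ``applying this to the null curves we derive the assertion,'' and your Lie-theoretic argument for surjectivity onto $\mathbf{SO}^{+}(2,1,\RR)$ replaces the paper's citation.
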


\begin{proof}
For any vector  $\x\in\RR^3_1$ we define a $2\times 2$ matrix $S$ by: 
\begin{equation*}\label{Spin_S-x_R31}
\x=(x_1,x_2,x_3)
 \ \leftrightarrow \ 
S=\left(
\begin{array}{cc}
   - x_3  & x_2 + x_1\\
   x_2 - x_1  &  x_3
\end{array}
\right) .
\end{equation*}
It is easy to see that the above correspondence  gives a linear isomorphism between $\RR^3_1$ and the linear space of all $2\times 2$
real matrices with vanishing trace. Moreover, this isomorphism satisfies the condition
$\det S = -\x^2$. So,  every linear operator, acting in the space of 
 $2\times 2$ real matrices, which preserves the determinant and the trace, 
determines an orthogonal operator in  $\RR^3_1$. 
 If $B\in\mathbf{SL}(2,\RR)$,
then $\det B S B^{-1} = \det S$ and $\trace B S B^{-1} = \trace S$.
For each such matrix $B$ there is an orthogonal matrix $A\in\mathbf{O}(2,1,\RR)$ corresponding to $B$. 
Consequently, there exists a group homomorphism $B \rightarrow A$:
\begin{equation}\label{Spin_B-A_R31}
\hat S = B S B^{-1} \ \rightarrow \ \hat\x = A\x \,.
\end{equation}
This homomorphism from  $\mathbf{SL}(2,\RR) $ in $\mathbf{O}(2,1,\RR)$ 
is a spinor map.  
The kernel of this map has two elements (see \cite{TdCastillo-1}): $I$ and $-I$,
$I$ being the unit matrix, and the image coincides with the connected component 
of the unit element in $\mathbf{O}(2,1,\RR)$. 
This is the group $\mathbf{SO}^{+}(2,1,\RR)$ of proper orthochronous motions in $\RR^3_1$. 
The form of the kernel and the image of \eqref{Spin_B-A_R31} imply that the map \eqref{Spin_B-A_R31}
gives rise to a group isomorphism:
\begin{equation*}%\label{SL2R/+-1_SO21+}
\mathbf{SL}(2,\RR) /\{I,-I\}\ \cong \ \mathbf{SO}^{+}(2,1,\RR)\,.
\end{equation*}
Applying this to the null curves in $\RR^3_1$, 
we derive the assertion.
\end{proof}

Every proper non-orthochronous motion in $\RR^3_1$ is a composition
of a proper orthochronous motion with a proper non-orthochronous motion,
obtained with changing the signs of the first two coordinates, 
i.e. with changing the signs of $f$ and $g$. Hence, the transformation formulas
for a proper non-orthochronous motion in $\RR^3_1$ are:
\begin{equation}\label{hatfg_fg-proper_nonortoh_move_R31}
\hat f = -f(c g + d)^2 \,; 
\qquad
\hat g = \ds\frac{a g + b}{c g + d}\,; 
\qquad 
a,b,c,d \in \RR\,; \;\;
ad-bc = -1 \,.
%\raisetag{-0.8ex}
\end{equation}

Analogously, we can see that the transformation formulas for a non-proper orthochronous motion in $\RR^3_1$ are
\begin{equation}\label{hatfg_fg-unproper_ortoh_move_R31}
\hat f = f(c g + d)^2 \,; 
\qquad
\hat g = \ds\frac{a g + b}{c g + d}\,; 
\qquad 
a,b,c,d \in \RR\,; \;\;
ad-bc = -1 \,,
%\raisetag{-0.8ex}
\end{equation}
and the transformation formulas for a non-proper non-orthochronous motion in $\RR^3_1$ are
\begin{equation}\label{hatfg_fg-unproper_nonortoh_move_R31}
\hat f = -f(c g + d)^2 \,; 
\qquad
\hat g = \ds\frac{a g + b}{c g + d}\,; 
\qquad 
a,b,c,d \in \RR\,; \;\;
ad-bc = 1 \,.
%\raisetag{-0.8ex}
\end{equation}

Let $\alpha$ be a null curve in $\RR^4_2$ parametrized by an arbitrary parameter.

The following statement holds true.

\begin{prop}\label{IsoCurv-nondeg_fgh_R42} \cite{Krasimir_Kanchev_2022126017}
Let $\alpha$ be a null curve in $\RR^4_2$ with Weierstrass-type representation \eqref{W_alphap_R42}. Then,  $\alpha$ is non-degenerate if and only if $g'h' \neq 0$\, at each point. 
\end{prop}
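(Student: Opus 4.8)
The plan is to compute the second derivative $\alpha''$ directly from the Weierstrass-type representation \eqref{W_alphap_R42} and to evaluate its scalar square ${\alpha''}^2$ with respect to the metric of $\RR^4_2$, aiming to show that this square is a nonzero multiple of $g'h'$. Since $f\neq 0$, the criterion ${\alpha''}^2\neq 0$ will then be equivalent to $g'h'\neq 0$, which is exactly the claimed characterization of non-degeneracy.

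First I would abbreviate the bracket in \eqref{W_alphap_R42} by the $\RR^4_2$-valued function $\vP=(gh+1,\,gh-1,\,h-g,\,h+g)$, so that $\alpha'=f\vP$. Differentiating with the product rule gives $\alpha''=f'\vP+f\vP'$, where $\vP'=\big(g'h+gh',\,g'h+gh',\,h'-g',\,h'+g'\big)$. Expanding the scalar square then yields
\begin{equation*}
{\alpha''}^2=f'^2\,\vP^2+2ff'\,(\vP\cdot\vP')+f^2\,{\vP'}^2 .
\end{equation*}
The idea is to show that the first two terms on the right vanish identically, leaving only the last one.

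The key step is to exploit the null structure. Because $\alpha$ is a null curve and $f\neq 0$, the relation $\alpha'^2=0$ gives $\vP^2=0$, which removes the first term; this is also immediate from the direct computation $-(gh+1)^2+(gh-1)^2-(h-g)^2+(h+g)^2=0$. I would then check that the mixed term vanishes as well: the contribution of the first two coordinates to $\vP\cdot\vP'$ equals $-2(gh)'$, while the contribution of the last two coordinates equals $+2(gh)'$, so $\vP\cdot\vP'=0$. Finally, a short computation with the metric signs gives ${\vP'}^2=-(g'h+gh')^2+(g'h+gh')^2-(h'-g')^2+(h'+g')^2=4g'h'$. Combining these, ${\alpha''}^2=4f^2g'h'$, and since $f\neq 0$ the equivalence follows.

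I expect the main obstacle to be verifying the two exact cancellations, namely $\vP\cdot\vP'=0$ and the collapse of ${\vP'}^2$ to $4g'h'$: both rely on the precise form of the representation and on keeping careful track of the indefinite signature $(-,+,-,+)$, so the difficulty is bookkeeping rather than conceptual. Everything else is a routine application of the product rule and of the definition of non-degeneracy.
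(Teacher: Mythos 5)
Your proposal is correct, and it follows essentially the same route as the paper, which derives exactly the identity ${\alpha''}^2 = 4f^2g'h'$ from \eqref{W_alphap_R42} (see the computation immediately preceding Theorem \ref{prop-W_natparm_IsoCurv_R42}); your three cancellations $\vP^2=0$, $\vP\cdot\vP'=0$, ${\vP'}^2=4g'h'$ all check out in the signature $(-,+,-,+)$. Since $f\neq 0$, the pointwise equivalence ${\alpha''}^2\neq 0 \Leftrightarrow g'h'\neq 0$ then gives the stated characterization of non-degeneracy.
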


Now, let $\alpha$ be parametrized by a natural parameter, i.e. ${\alpha''}^2=\pm 1$. Using \eqref{W_alphap_R42} we get ${\alpha''}^2 = 4f^2g'h'$ and hence, 
 $4f^2g'h'=\pm 1$\,, 
or equivalently $4f^2|g'h'|= 1$\,.
So, we obtain:
\begin{equation}\label{fgh_natparm_R42}
f = \ds\frac{\omega}{2\sqrt{|g'h'|}}\,; \qquad \omega=\pm 1 \,,
\end{equation}
and the sign of $\omega$ coinsides with the sign of $f$.
Using \eqref{W_alphap_R42} and \eqref{fgh_natparm_R42}, we obtain:
 
\begin{thm}\label{prop-W_natparm_IsoCurv_R42}\cite{Krasimir_Kanchev_2022126017}
Let $\alpha$ be a null curve in $\RR^4_2$, parametrized by a natural parameter, i.e. ${\alpha''}^2=\pm 1$. 
Then, $\alpha$ has the following Weierstrass-type representation:
\begin{equation}\label{W_natparm_alphap_R42}
\alpha'= \ds\frac{\omega}{2\sqrt{|g'h'|}}\left(gh+1, gh-1, h-g, h+g\right),
\end{equation}
where $g$ and $h$ are smooth real functions satisfying $g'h' \neq 0$, and $\omega=\pm 1$.
The functions $g$ and  $h$ as well as $\omega$ are determined uniquely by $\alpha$ in accordance with \eqref{fgh_alphap_R42}.

Conversely, if $(g,h)$ is a pair of smooth real functions satisfying $g'h' \neq 0$ and $\omega=\pm 1$, then there exists a null curve  $\alpha$ in $\RR^4_2$ parametrized by a natural parameter, such that $\alpha'$ is expressed by the given functions in the form
\eqref{W_natparm_alphap_R42}. 
\end{thm}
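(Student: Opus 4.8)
The plan is to derive the representation by specializing the general Weierstrass-type formula of Theorem \ref{prop-W_IsoCurv_R42} under the normalization forced by the natural-parameter condition, and then to reverse this construction for the converse. For the direct statement I would first apply Theorem \ref{prop-W_IsoCurv_R42}, after arranging $\xi_1-\xi_2\neq 0$ by a proper motion via Remark \ref{rem-W_IsoCurv_R42} if necessary (a motion preserves ${\alpha''}^2$, hence the natural parametrization). This writes $\alpha'=f(gh+1,gh-1,h-g,h+g)$ with $f\neq 0$ and $f,g,h$ uniquely determined by \eqref{fgh_alphap_R42}. Since ${\alpha''}^2=\pm 1\neq 0$, the curve is non-degenerate, so Proposition \ref{IsoCurv-nondeg_fgh_R42} yields $g'h'\neq 0$ at each point.

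The crucial step is the identity ${\alpha''}^2=4f^2g'h'$. I would prove it by setting $V=(gh+1,gh-1,h-g,h+g)$, so that $\alpha'=fV$ and $\alpha''=f'V+fV'$, and then evaluating the three pseudo-Euclidean inner products: $V^2=0$ is the null condition already built into \eqref{W_alphap_R42}, a short computation gives the cancellation $V\cdot V'=0$, and the difference-of-squares computation gives $V'^2=4g'h'$. Hence ${\alpha''}^2=f^2V'^2=4f^2g'h'$. Equating with ${\alpha''}^2=\pm 1$ gives $4f^2|g'h'|=1$, i.e. \eqref{fgh_natparm_R42}, namely $f=\ds\frac{\omega}{2\sqrt{|g'h'|}}$ with $\omega=\sign f=\pm 1$. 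Substituting this value of $f$ into \eqref{W_alphap_R42} produces exactly \eqref{W_natparm_alphap_R42}, and the uniqueness of $g$, $h$, $\omega$ is inherited from the uniqueness clause of Theorem \ref{prop-W_IsoCurv_R42} together with $\omega=\sign f$. For the converse, given $(g,h)$ with $g'h'\neq 0$ and a sign $\omega=\pm 1$, I would define $f=\ds\frac{\omega}{2\sqrt{|g'h'|}}$; since $g'h'$ is smooth and nowhere zero, $f$ is smooth and $f\neq 0$. Applying the converse part of Theorem \ref{prop-W_IsoCurv_R42} to the triple $(f,g,h)$ yields a null curve $\alpha$ with $\alpha'$ of the form \eqref{W_alphap_R42}; reusing ${\alpha''}^2=4f^2g'h'$ with $f^2=1/(4|g'h'|)$ gives ${\alpha''}^2=\sign(g'h')=\pm 1$, so the parameter is natural.

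The only genuine computation is the inner-product identity ${\alpha''}^2=4f^2g'h'$, which is in fact already recorded in the text immediately preceding the theorem; everything else is a direct assembly of Theorem \ref{prop-W_IsoCurv_R42}, Proposition \ref{IsoCurv-nondeg_fgh_R42}, and the normalization \eqref{fgh_natparm_R42}. The main (and quite modest) obstacle is therefore keeping track of the two cancellations $V^2=0$ and $V\cdot V'=0$ that collapse ${\alpha''}^2$ to $f^2V'^2$; once these are verified, both directions follow routinely.
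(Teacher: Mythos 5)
Your proposal is correct and follows essentially the same route as the paper: you specialize Theorem \ref{prop-W_IsoCurv_R42} via the identity ${\alpha''}^2=4f^2g'h'$ to obtain the normalization \eqref{fgh_natparm_R42} and substitute, then reverse the construction for the converse, which is exactly the derivation the paper gives in the text preceding the theorem (your check of $V^2=0$, $V\cdot V'=0$, $V'^2=4g'h'$ just fills in the computation the paper states without proof). The one caveat, which the paper shares via Remark \ref{rem-W_IsoCurv_R42}, is that when $\xi_1-\xi_2=0$ the representation \eqref{W_natparm_alphap_R42} holds only for a proper-motion image of $\alpha$, not for $\alpha$ itself, so your phrase ``after arranging $\xi_1-\xi_2\neq 0$ by a proper motion'' should be read in that up-to-congruence sense.
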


\begin{remark}\label{rem2-W_natparm_IsoCurv_R42}
If we apply   \eqref{W_natparm_alphap_R42} to the pair of functions $(g,h)$, we obtain two null curves
for $\omega=1$ and $\omega=-1$. These curves are related by a proper orthochronous motion in $\RR^4_2$.
\end{remark}

Note also that, if $\alpha$ is a null curve in $\RR^3_1$, then
${\alpha''}^2=4f^2g'^2$ implies the following statement:

\begin{prop}\label{IsoCurv-nondeg_fg_R31}
If $\alpha$ is a null curve in $\RR^3_1$ with Weierstrass-type representation \eqref{W_alphap_R31},
then $\alpha$ is non-degenerate if and only if $g' \neq 0$ at each point. 
\end{prop}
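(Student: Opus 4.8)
The plan is to reduce the degeneracy condition to the single identity ${\alpha''}^2 = 4f^2 g'^2$ already announced in the text just above, and then to read off the equivalence directly. First I would take the Weierstrass-type representation \eqref{W_alphap_R31}, namely $\alpha' = f(g^2+1, g^2-1, 2g)$, and differentiate once more to obtain $\alpha''$. Writing $u = (g^2+1, g^2-1, 2g)$ and $v = (g, g, 1)$, the product rule gives $\alpha'' = f' u + 2 f g' v$, so expanding the indefinite scalar product of $\RR^3_1$ yields ${\alpha''}^2 = (f')^2\, u^2 + 4 f f' g'\, (u \cdot v) + 4 f^2 (g')^2\, v^2$.

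The key step is to evaluate the three inner products $u^2$, $u \cdot v$, $v^2$ with respect to the product $\va \cdot \vb = -a_1b_1 + a_2b_2 + a_3b_3$. A direct computation gives $u^2 = -(g^2+1)^2 + (g^2-1)^2 + 4g^2 = 0$, which is precisely the null condition $\alpha'^2 = f^2 u^2 = 0$; similarly $u \cdot v = -g(g^2+1) + g(g^2-1) + 2g = 0$ and $v^2 = -g^2 + g^2 + 1 = 1$. Hence the first two terms drop out and I obtain ${\alpha''}^2 = 4 f^2 (g')^2$, matching the relation quoted before the statement.

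Finally, since $f \neq 0$ is built into the definition of the Weierstrass-type representation, the quantity $4 f^2 (g')^2$ vanishes at a point if and only if $g'$ vanishes there. Therefore ${\alpha''}^2 \neq 0$ at every point — which is exactly the definition of $\alpha$ being non-degenerate — holds if and only if $g' \neq 0$ at every point, which is the asserted equivalence.

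The main obstacle here is cosmetic rather than conceptual: the entire argument rests on the vanishing of the two cross terms, and this in turn relies on the algebraic identities $u^2 = 0$ and $u \cdot v = 0$ specific to the form \eqref{W_alphap_R31}. Once these are checked by a short calculation, nothing else is needed. I would also remark that the analogue in $\RR^4_2$ (Proposition \ref{IsoCurv-nondeg_fgh_R42}) required $g'h' \neq 0$ because there the corresponding identity reads ${\alpha''}^2 = 4 f^2 g' h'$; the present case is simply the specialization $h = g$, consistent with the earlier observation that $x_3 = 0$ is equivalent to $g = h$.
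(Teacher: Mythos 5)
Your proposal is correct and follows exactly the paper's route: the paper justifies this proposition solely by the identity ${\alpha''}^2 = 4f^2 g'^2$ (stated just before the proposition), and your computation of $\alpha'' = f'u + 2fg'v$ together with the verifications $u^2 = 0$, $u \cdot v = 0$, $v^2 = 1$ is precisely the short calculation the paper leaves implicit. Since $f \neq 0$ is part of the representation, the conclusion ${\alpha''}^2 \neq 0 \Leftrightarrow g' \neq 0$ follows as you state, so nothing is missing.
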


 If the parameter in \eqref{W_alphap_R31} is natural, then
$ {\alpha''}^2=4f^2g'^2 $ implies $2|f||g'|=1$\,. Hence, we find
\begin{equation}\label{fg_natparm_R31}
f = \ds\frac{\omega}{2|g'|}\,; \qquad \omega=\pm 1, \,
\end{equation}
where the sign of $\omega$ coincides with the sign of $f$.
Now, from \eqref{W_alphap_R31} and \eqref{fg_natparm_R31} we derive

\begin{thm}\label{prop-W_natparm_IsoCurv_R31}
Let $\alpha$ be a null curve in $\RR^3_1$, parametrized by a natural parameter, i.e. ${\alpha''}^2= 1$. 
Then, $\alpha$ has the following Weierstrass-type representation:
\begin{equation}\label{W_natparm_alphap_R31}
\alpha'= \ds\frac{\omega}{2 |g'|}\left(\,g^2+1\,,\, g^2-1\,,\, 2 g\, \right),
\end{equation}
where  $g' \neq 0$\, and $\omega=\pm 1$\,.
The function $g$ and the sign of  $\omega$ are uniquely determined by $\alpha$. 
 
Conversely, if $g$ is a  function,  $g' \neq 0$, $\omega=\pm 1$, then there exists a null curve $\alpha$ in $\RR^3_1$ parametrized by a natural parameter 
such  that \eqref{W_natparm_alphap_R31} holds. 
\end{thm}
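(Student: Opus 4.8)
The plan is to derive this statement as the $\RR^3_1$ counterpart of Theorem \ref{prop-W_natparm_IsoCurv_R42}, assembling three facts already established: the basic Weierstrass-type representation of Theorem \ref{prop-W_IsoCurv_R31}, the non-degeneracy criterion of Proposition \ref{IsoCurv-nondeg_fg_R31}, and the normalization \eqref{fg_natparm_R31} forced by the natural parameter. No new machinery is needed; the content is bookkeeping of signs together with one short curvature computation.

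For the direct part, I would start from a null curve $\alpha$ written in the form \eqref{W_alphap_R31}, so that $\alpha' = f(g^2+1, g^2-1, 2g)$ with $f,g$ given uniquely by \eqref{fg_alphap_R31} (the condition $\xi_1-\xi_2\neq 0$ is harmless by Remark \ref{rem-W_IsoCurv_R42}). A direct differentiation gives ${\alpha''}^2 = 4f^2 g'^2$, the cross terms cancelling, so the hypothesis ${\alpha''}^2 = 1$ reads $4 f^2 g'^2 = 1$. In particular $g'\neq 0$, which by Proposition \ref{IsoCurv-nondeg_fg_R31} is exactly non-degeneracy, and taking square roots yields $2|f|\,|g'| = 1$, i.e. $f = \omega/(2|g'|)$ with $\omega = \pm 1$ equal to the sign of $f$; this is \eqref{fg_natparm_R31}. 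Substituting back into \eqref{W_alphap_R31} produces the claimed formula \eqref{W_natparm_alphap_R31}. Uniqueness is then immediate: $g = \xi_3/(\xi_1-\xi_2)$ and $\omega = \sign\big(\tfrac12(\xi_1-\xi_2)\big)$ are read off from $\alpha'$ through \eqref{fg_alphap_R31}.

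For the converse, given a function $g$ with $g'\neq 0$ and a sign $\omega = \pm 1$, I would set $f := \omega/(2|g'|)$, which is a well-defined nowhere-vanishing function, and invoke the converse part of Theorem \ref{prop-W_IsoCurv_R31} to obtain a null curve $\alpha$ in $\RR^3_1$ with $\alpha' = f(g^2+1, g^2-1, 2g)$. It then only remains to check that this parameter is natural: the identity ${\alpha''}^2 = 4 f^2 g'^2$ gives ${\alpha''}^2 = 1$ by the choice of $f$, completing the proof.

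The computation is routine, so the only point deserving care — and the reason the statement carries ${\alpha''}^2 = 1$ rather than the $\pm 1$ appearing in the $\RR^4_2$ version — is that in $\RR^3_1$ one always has ${\alpha''}^2 = 4 f^2 g'^2 \geq 0$. Thus a null curve in $\RR^3_1$ can never be naturally parametrized with ${\alpha''}^2 = -1$, and a single sign $\omega$ (rather than two independent normalizations) suffices. This is precisely the remark made in the preliminaries that minimal Lorentz surfaces in $\RR^3_1$, viewed inside $\RR^4_2$, are always of first type.
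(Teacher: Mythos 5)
Your proposal is correct and follows essentially the same route as the paper: the identity ${\alpha''}^2 = 4f^2g'^2$ derived from \eqref{W_alphap_R31}, the normalization $f=\omega/(2|g'|)$ of \eqref{fg_natparm_R31} forced by ${\alpha''}^2=1$, substitution to obtain \eqref{W_natparm_alphap_R31}, and the converse via the converse part of Theorem \ref{prop-W_IsoCurv_R31}. Your closing observation that ${\alpha''}^2\geq 0$ rules out the normalization ${\alpha''}^2=-1$ in $\RR^3_1$ likewise matches the remark the paper makes in the preliminaries.
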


\begin{remark}\label{rem2-W_natparm_IsoCurv_R31}
Using \eqref{W_natparm_alphap_R31} for a given function $g$ we obtain two null curves in $\RR^3_1$ corresponding to  $\omega=1 $ and $\omega=-1$, respectively.
These two curves are related  by a non-proper motion in $\RR^3_1$.
\end{remark}

%==============================================================================

\section{Weierstrass-type representation for minimal Lorentz surfaces in $\RR^4_2$ and $\RR^3_1$}\label{sect_W-MinLrntzSurf_R42_R31}

Let us recall the following result given in \cite{Krasimir_Kanchev_2022126017}.

\begin{thm}\label{prop-W_MinLorSurf_R42}
Let $\M$ be a  minimal Lorentz surface in  $\RR^4_2$ and  $(\alpha_1, \alpha_2)$  be its corresponding pair of null curves. 
Then,
\begin{equation}\label{W_MinLorSurf_R42}
\alpha'_i = f_i \big(g_i h_i + 1,  g_i h_i - 1,  h_i - g_i, h_i + g_i\big); \qquad i=1, 2,
\end{equation}
where $(f_i,g_i,h_i)$; $i=1, 2$ are  two triples of smooth real functions such that
\begin{equation}\label{W_cond_MinLorSurf_R42}
f_1(t_1)\neq 0; \qquad f_2(t_2)\neq 0; \qquad g_1(t_1) \neq g_2(t_2); \qquad h_1(t_1) \neq h_2(t_2).
\end{equation}

Conversely, if $(f_i,g_i,h_i)$; $i=1, 2$ are  two triples of  smooth real functions satisfying conditions \eqref{W_cond_MinLorSurf_R42}, then, there exists a minimal Lorentz surface in $\RR^4_2$ such that its corresponding null curves have a Weierstrass-type representation of the form  
 \eqref{W_MinLorSurf_R42} expressed by the given functions. 
\end{thm}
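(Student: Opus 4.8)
The plan is to assemble this statement from the single-curve representation in Theorem \ref{prop-W_IsoCurv_R42}, the characterization of minimal Lorentz surfaces via pairs of null curves recalled in \eqref{MinSurf-NullCurves_R42}, and one explicit computation of $\alpha'_1 \cdot \alpha'_2$. For the forward direction, I start with the minimal Lorentz surface $\M$ and its corresponding null curves $\alpha_1, \alpha_2$, which by the discussion around \eqref{MinSurf-NullCurves_R42} satisfy ${\alpha'_i}^2 = 0$ and $\alpha'_1 \cdot \alpha'_2 \neq 0$. Applying Theorem \ref{prop-W_IsoCurv_R42} to each $\alpha_i$ yields the representation \eqref{W_MinLorSurf_R42} with $f_i \neq 0$. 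One must first secure the hypothesis $\xi_1 - \xi_2 \neq 0$ of that theorem; by Remark \ref{rem-W_IsoCurv_R42} this is arranged by a proper motion of $\RR^4_2$, which moves both curves simultaneously and leaves the intrinsic geometry unchanged. It then remains to produce the conditions $g_1 \neq g_2$ and $h_1 \neq h_2$ of \eqref{W_cond_MinLorSurf_R42} from $\alpha'_1 \cdot \alpha'_2 \neq 0$.

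The key step is to substitute the two representations into the scalar product of $\RR^4_2$ and simplify. Expanding the four component products and collecting terms, the contributions $g_i h_i g_j h_j$ and $h_i h_j$ cancel in pairs, leaving
\begin{equation*}
\alpha'_1 \cdot \alpha'_2 = -2 f_1 f_2 (g_1 - g_2)(h_1 - h_2).
\end{equation*}
Since $f_1 f_2 \neq 0$, the condition $\alpha'_1 \cdot \alpha'_2 \neq 0$ is thus equivalent to $g_1 \neq g_2$ together with $h_1 \neq h_2$, which is precisely \eqref{W_cond_MinLorSurf_R42}. This settles the forward implication.

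For the converse, given two triples satisfying \eqref{W_cond_MinLorSurf_R42}, the converse part of Theorem \ref{prop-W_IsoCurv_R42} produces null curves $\alpha_1, \alpha_2$ with $\alpha'_i$ of the form \eqref{W_MinLorSurf_R42}, so that ${\alpha'_i}^2 = 0$ automatically. Reading the identity above in reverse, $f_1 f_2 \neq 0$ together with $g_1 \neq g_2$, $h_1 \neq h_2$ gives $\alpha'_1 \cdot \alpha'_2 \neq 0$. Setting $\x(t_1, t_2) = \frac{1}{2}\big(\alpha_1(t_1) + \alpha_2(t_2)\big)$ as in \eqref{MinSurf-NullCurves_R42}, the first fundamental form has $E = G = 0$ and $F = \frac{1}{4}\,\alpha'_1 \cdot \alpha'_2 \neq 0$; hence $\x$ is an immersion carrying a non-degenerate indefinite metric, and by the characterization recalled in Section \ref{S:Prelim} it is a minimal Lorentz surface with $(\alpha_1, \alpha_2)$ as its associated pair of null curves.

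I expect the only real work to be bookkeeping rather than conceptual difficulty. The heart of the argument is the factorization $\alpha'_1 \cdot \alpha'_2 = -2 f_1 f_2 (g_1 - g_2)(h_1 - h_2)$, which converts the geometric non-degeneracy $\alpha'_1 \cdot \alpha'_2 \neq 0$ into the algebraic conditions \eqref{W_cond_MinLorSurf_R42} and back; carrying out the expansion cleanly so the cross-terms visibly cancel is the one place that invites care. A secondary point is checking that the normalization $\xi_1 - \xi_2 \neq 0$ demanded by Theorem \ref{prop-W_IsoCurv_R42} can be imposed on both curves at once, which is immediate since a single generic proper motion achieves an open condition for each curve simultaneously.
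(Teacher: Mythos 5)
Your proposal is correct and is essentially the intended argument: the paper itself only recalls this theorem from \cite{Krasimir_Kanchev_2022126017} without reproving it, but your route---applying Theorem \ref{prop-W_IsoCurv_R42} to each null curve and converting $\alpha'_1\cdot\alpha'_2\neq 0$ into \eqref{W_cond_MinLorSurf_R42} via the factorization $\alpha'_1\cdot\alpha'_2=-2f_1f_2(g_1-g_2)(h_1-h_2)$ (which checks out, and is exactly the identity underlying the coefficient formula \eqref{F-gh_MinLorSurf_R42-0})---is the standard one, with the converse correctly assembled from the converse of Theorem \ref{prop-W_IsoCurv_R42} and \eqref{MinSurf-NullCurves_R42}. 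One small refinement: your claim that a single generic proper motion imposes $\xi_1-\xi_2\neq 0$ on both curves \emph{simultaneously} is immediate only at a fixed pair of parameter values (two open dense conditions in the motion group, then continuity gives a neighbourhood), not along the entire curves, whose tangent indicatrices may sweep through the bad directions after any fixed motion; this is harmless here since, as the paper states at the start of Section \ref{MinLorSurf_R42-MinLorSurfs_R31}, all considerations are local, and it is the same convention implicit in Remark \ref{rem-W_IsoCurv_R42}.
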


The Gauss curvature $K$ and the normal curvature $\varkappa$ are expressed as follows \cite{Krasimir_Kanchev_2022126017}:
\begin{equation}\label{K_kappa-fgh_MinLorSurf_R42}
\begin{array}{llr}
K         &=& \ds\frac{2}{f_1f_2 (g_1-g_2)(h_1-h_2)}
              \left(\ds\frac{g'_1g'_2}{(g_1-g_2)^2}+\ds\frac{h'_1h'_2}{(h_1-h_2)^2}\right);\\[3ex]
\varkappa &=& \ds\frac{2}{f_1f_2 (g_1-g_2)(h_1-h_2)}
              \left(\ds\frac{g'_1g'_2}{(g_1-g_2)^2}-\ds\frac{h'_1h'_2}{(h_1-h_2)^2}\right).
\end{array}
\end{equation}

\smallskip

We can obtain similar results for a minimal Lorentz surface in $\RR^3_1$,
by putting $g_i=h_i$ in the corresponding statements for a minimal surface in $\RR^4_2$. Namely, we have

\begin{thm}\label{prop-W_MinLorSurf_R31}
Let $\M$ be a minimal Lorentz surface in $\RR^3_1$ and $(\alpha_1, \alpha_2)$ be the corresponding pair of
null curves. 
Then, $\alpha'_1$ and $\alpha'_2$ have the following form:
\begin{equation}\label{W_MinLorSurf_R31}
\alpha'_i = f_i\big(\,g_i^2 + 1 \,,\, g_i^2 - 1 \,,\, 2 g_i \,\big); \qquad i=1;2\,,
\end{equation}
where $(f_i,g_i)$; $i=1;2$  are two pairs of  functions, such that: 
\begin{equation}\label{W_cond_MinLorSurf_R31}
f_1(t_1)\neq 0\,; \qquad f_2(t_2)\neq 0\,; \qquad g_1(t_1) \neq g_2(t_2) \,.
\end{equation}

 Conversely, let $(f_i,g_i)$; $i=1;2$ be two pairs of functions, satisfying \eqref{W_cond_MinLorSurf_R31}.
Then, there exists a minimal Lorentz surface in $\RR^3_1$, 
such that its corresponding curves have the form \eqref{W_MinLorSurf_R31}.
\end{thm}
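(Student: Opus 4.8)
The plan is to deduce the statement from Theorem \ref{prop-W_MinLorSurf_R42} by regarding $\RR^3_1$ as the linear subspace $\{x_3=0\}$ of $\RR^4_2$, exactly as in the preliminaries. Two facts carry the argument. First, $\RR^3_1$ is a totally geodesic subspace of $\RR^4_2$, so a Lorentz surface whose image lies in $\{x_3=0\}$ has the same second fundamental form, hence the same mean curvature vector, whether computed in $\RR^3_1$ or in $\RR^4_2$; in particular it is minimal in $\RR^3_1$ if and only if it is minimal in $\RR^4_2$. Second, by the representation \eqref{W_alphap_R42} the third component of $\alpha'$ equals $f(h-g)$, so with $f\neq 0$ a null curve has vanishing third coordinate precisely when $g=h$. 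Thus the whole proof amounts to specialising Theorem \ref{prop-W_MinLorSurf_R42} to the locus $g_i=h_i$.

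For the direct implication I would view the given minimal Lorentz surface $\M$ in $\RR^3_1$ as a minimal Lorentz surface in $\RR^4_2$ contained in $\{x_3=0\}$, and apply Theorem \ref{prop-W_MinLorSurf_R42}. This produces triples $(f_i,g_i,h_i)$ satisfying \eqref{W_cond_MinLorSurf_R42} together with \eqref{W_MinLorSurf_R42}. The decomposition \eqref{MinSurf-NullCurves_R42} shows that $2\x=\alpha_1(t_1)+\alpha_2(t_2)$ has vanishing third coordinate for all $(t_1,t_2)$, which forces the third coordinates of $\alpha_1$ and of $\alpha_2$ to be opposite constants; using the freedom to add opposite constant vectors to $\alpha_1$ and $\alpha_2$ I may normalise both to zero, so that $\alpha_1$ and $\alpha_2$ themselves lie in $\RR^3_1$. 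Then the third component $f_i(h_i-g_i)$ of $\alpha'_i$ vanishes, and since $f_i\neq 0$ I obtain $h_i=g_i$ for $i=1,2$. Substituting $h_i=g_i$ into \eqref{W_MinLorSurf_R42} gives exactly \eqref{W_MinLorSurf_R31}, while the two conditions $g_1\neq g_2$ and $h_1\neq h_2$ of \eqref{W_cond_MinLorSurf_R42} now coincide and reduce to \eqref{W_cond_MinLorSurf_R31}.

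For the converse, given pairs $(f_i,g_i)$ satisfying \eqref{W_cond_MinLorSurf_R31}, I would set $h_i:=g_i$. Then the triples $(f_i,g_i,h_i)$ satisfy \eqref{W_cond_MinLorSurf_R42}, since the requirement $h_1\neq h_2$ is now identical to $g_1\neq g_2$. The converse part of Theorem \ref{prop-W_MinLorSurf_R42} yields a minimal Lorentz surface in $\RR^4_2$ whose null curves obey \eqref{W_MinLorSurf_R42}; because $h_i=g_i$, the third component of each $\alpha'_i$ vanishes, so both null curves, and hence $\x$, lie in $\{x_3=0\}=\RR^3_1$ after fixing the integration constants. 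By the totally geodesic remark this surface is minimal in $\RR^3_1$, and its null curves take the form \eqref{W_MinLorSurf_R31}.

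The only genuinely non-formal point is the equivalence of minimality in $\RR^3_1$ and in $\RR^4_2$ for surfaces lying in the hyperplane; everything else is the substitution $g_i=h_i$ together with bookkeeping of the additive constants. As a consistency check one computes $\alpha'_1\cdot\alpha'_2=-2f_1f_2(g_1-g_2)^2$ directly from \eqref{W_MinLorSurf_R31}, which confirms that \eqref{W_cond_MinLorSurf_R31} is exactly the conjunction of $f_1,f_2\neq 0$ with $\alpha'_1\cdot\alpha'_2\neq 0$, as required for $(t_1,t_2)$ to be isotropic coordinates.
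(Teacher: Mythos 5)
Your proof is correct and takes essentially the same route as the paper, which obtains Theorem \ref{prop-W_MinLorSurf_R31} precisely by setting $g_i=h_i$ in Theorem \ref{prop-W_MinLorSurf_R42} under the identification $\RR^3_1=\{x_3=0\}\subset\RR^4_2$, using the observation made in Section \ref{sect_W-NullCurves_R42_R31} that $x_3=0$ is equivalent to $g=h$ in \eqref{W_alphap_R42}. The details you supply --- the totally geodesic argument for the equivalence of the two notions of minimality, the normalization of the additive constants forcing the null curves into the hyperplane, and the check $\alpha'_1\cdot\alpha'_2=-2f_1f_2(g_1-g_2)^2$ --- are exactly what the paper leaves implicit.
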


For the curvatures of the surface we find
\begin{equation*}\label{K-fg_MinLorSurf_R31}
K       =        \ds\frac{4g'_1g'_2}{f_1f_2 (g_1-g_2)^4} \,, \qquad \varkappa=0 \,.
\end{equation*}

 Let $\M$ be a minimal Lorentz surface of general type in $\RR^4_2$,
parametrized by canonical isotropic coordinates, i.e. the corresponding
null curves are parametrized by natural parameters. 
Then, using Theorem \ref{prop-W_natparm_IsoCurv_R42}\,,
we obtain the next result:

\begin{thm}\label{prop-CanW_MinLorSurf_R42} \cite{Krasimir_Kanchev_2022126017}
Let $\M$ be a minimal Lorentz surface of general type in $\RR^4_2$ parametrized by canonical isotropic coordinates. Then, $\M$ has the following Weierstrass-type representation:   
\begin{equation}\label{CanW_MinLorSurf_R42}
\alpha'_i= \ds{\frac{\omega_i}{2 \sqrt{|g'_ih'_i|}}} \left(g_ih_i+1, g_ih_i-1, h_i-g_i, h_i+g_i\right); \qquad \omega_i=\pm 1,
\end{equation}
where $(g_i,h_i)$; $i=1, 2$ are two pairs of smooth real functions such that: 
\begin{equation}\label{CanW_cond_MinLorSurf_R42}
g'_1(t_1)h'_1(t_1)\neq 0; \qquad g'_2(t_2)h'_2(t_2)\neq 0; \qquad g_1(t_1) \neq g_2(t_2); \qquad h_1(t_1) \neq h_2(t_2),
\end{equation}
and, in addition, if $\M$  is of third type, then $g'_1h'_1>0$ and $g'_2h'_2<0$.

Conversely, if $(g_i,h_i)$; $i=1, 2$ are  two pairs of smooth real functions satisfying \eqref{CanW_cond_MinLorSurf_R42}, then there exists a minimal Lorentz surface of general type in $\RR^4_2$ parametrized by  canonical isotropic coordinates and having  Weierstrass-type representation
\eqref{CanW_MinLorSurf_R42}.  
\end{thm}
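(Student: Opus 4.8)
The plan is to derive the canonical representation \eqref{CanW_MinLorSurf_R42} by combining the general Weierstrass representation of Theorem~\ref{prop-W_MinLorSurf_R42} with the normalization of $f$ forced by a natural parameter, and then to keep careful track of the signs that encode the type of the surface. The single computational fact I will lean on throughout is the identity ${\alpha''_i}^2 = 4 f_i^2 g'_i h'_i$, recorded before \eqref{fgh_natparm_R42}.

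For the direct implication, I start from a minimal Lorentz surface $\M$ of general type parametrized by canonical isotropic coordinates, with corresponding null curves $\alpha_1,\alpha_2$. By Theorem~\ref{prop-W_MinLorSurf_R42}, each $\alpha'_i$ has the form \eqref{W_MinLorSurf_R42} for triples $(f_i,g_i,h_i)$ obeying \eqref{W_cond_MinLorSurf_R42}; in particular $f_i\neq 0$, $g_1\neq g_2$ and $h_1\neq h_2$. Because the coordinates are canonical, each $\alpha_i$ carries a natural parameter, so ${\alpha''_i}^2=\pm 1$. Feeding this into the identity above gives $4 f_i^2 |g'_i h'_i|=1$, whence $g'_i h'_i\neq 0$ and $f_i=\omega_i/(2\sqrt{|g'_i h'_i|})$ with $\omega_i=\pm 1$ (equivalently, this is \eqref{fgh_natparm_R42} applied to each curve, as in Theorem~\ref{prop-W_natparm_IsoCurv_R42}). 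Substituting these $f_i$ back into \eqref{W_MinLorSurf_R42} produces exactly \eqref{CanW_MinLorSurf_R42}, and the conditions \eqref{CanW_cond_MinLorSurf_R42} follow at once: $g'_i h'_i\neq 0$ is the non-degeneracy criterion of Proposition~\ref{IsoCurv-nondeg_fgh_R42}, while $g_1\neq g_2$ and $h_1\neq h_2$ are inherited from \eqref{W_cond_MinLorSurf_R42}.

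The genuinely delicate point, and the one I would treat with care, is the third-type clause, where the sign conventions must be matched. Since $4 f_i^2>0$, the identity ${\alpha''_i}^2=4 f_i^2 g'_i h'_i$ shows that $\sign({\alpha''_i}^2)=\sign(g'_i h'_i)$. For a surface of third type the canonical normalization in Definition~\ref{Can_Coord-MinLorSurf_R42-IsoCurvs} fixes ${\alpha''_1}^2=1$ and ${\alpha''_2}^2=-1$, so $g'_1 h'_1>0$ and $g'_2 h'_2<0$, as asserted; the same identity shows that for first or second type the two products $g'_1 h'_1$ and $g'_2 h'_2$ share a common sign, consistently with Definition~\ref{Def-MinSurf_kind123-IsoCurvs}.

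For the converse I reverse each step. Given pairs $(g_i,h_i)$ and signs $\omega_i$ satisfying \eqref{CanW_cond_MinLorSurf_R42}, I set $f_i=\omega_i/(2\sqrt{|g'_i h'_i|})$, which is well defined and nonzero precisely because $g'_i h'_i\neq 0$; together with $g_1\neq g_2$ and $h_1\neq h_2$, the conditions \eqref{W_cond_MinLorSurf_R42} hold, so the converse part of Theorem~\ref{prop-W_MinLorSurf_R42} yields a minimal Lorentz surface $\M$ with null curves \eqref{W_MinLorSurf_R42}. A one-line computation with this $f_i$ gives ${\alpha''_i}^2=4 f_i^2 g'_i h'_i=\sign(g'_i h'_i)=\pm 1$, so each $\alpha_i$ is parametrized by a natural parameter (the coordinates are canonical) and, being non-degenerate by Proposition~\ref{IsoCurv-nondeg_fgh_R42}, makes $\M$ of general type. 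Finally, if in addition $g'_1 h'_1>0$ and $g'_2 h'_2<0$, the same computation gives ${\alpha''_1}^2>0$ and ${\alpha''_2}^2<0$, so $\M$ is of third type with the prescribed numeration. The main obstacle is thus bookkeeping rather than calculation: the three sign data, namely $\omega_i$, $\sign(g'_i h'_i)$, and the type of $\M$, must be kept in exact correspondence, which I would set up explicitly at the start.
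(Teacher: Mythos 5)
Your proof is correct and follows essentially the same route the paper indicates: the paper obtains this theorem (recalled from the cited reference) precisely by applying the natural-parameter normalization $f_i=\omega_i/\bigl(2\sqrt{|g'_ih'_i|}\bigr)$ of \eqref{fgh_natparm_R42} and Theorem~\ref{prop-W_natparm_IsoCurv_R42} to the pair of null curves in Theorem~\ref{prop-W_MinLorSurf_R42}, exactly as you do. Your explicit sign bookkeeping via ${\alpha''_i}^2=4f_i^2g'_ih'_i$ for the third-type clause is a faithful elaboration of what the paper leaves implicit, not a different method.
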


Formula \eqref{CanW_MinLorSurf_R42} is called  \emph{canonical Weierstrass-type representation} for 
a minimal Lorentz surface of general type in $\RR^4_2$ \cite{Krasimir_Kanchev_2022126017}.

Note that  using \eqref{CanW_MinLorSurf_R42} we can calculate that 
\begin{equation}\label{F-gh_MinLorSurf_R42-0}
F=-\frac{\omega_1\omega_2 (g_1-g_2)(h_1-h_2)}{8\sqrt{|g'_1h'_1g'_2h'_2|}}\,.
\end{equation}

If the canonical isotropic coordinates are chosen in such a way that  $F<0$, then
from \eqref{K_kappa-fgh_MinLorSurf_R42} and \eqref{fgh_natparm_R42} we obtain that the curvatures  $K$ and $\varkappa$ are expressed in terms of canonical coordinates as follows:
\begin{equation}\label{K_kappa-gh_MinLorSurf_R42}
\begin{array}{llr}
K         &=& \ds\frac{ 8  \sqrt{|g'_1h'_1g'_2h'_2|}}{|(g_1-g_2)(h_1-h_2)|}
              \left(\ds\frac{g'_1g'_2}{(g_1-g_2)^2}+\ds\frac{h'_1h'_2}{(h_1-h_2)^2}\right);\\[3ex]
\varkappa &=& \ds\frac{ 8  \sqrt{|g'_1h'_1g'_2h'_2|}}{|(g_1-g_2)(h_1-h_2)|}
              \left(\ds\frac{g'_1g'_2}{(g_1-g_2)^2}-\ds\frac{h'_1h'_2}{(h_1-h_2)^2}\right).
\end{array}
\end{equation}

\smallskip

Analogously, we have  the following theorem giving the canonical Weierstrass-type representation of a minimal Lorentz surface in $\RR^3_1$.

\begin{thm}\label{prop-CanW_MinLorSurf_R31}
Let $\M$ be a minimal Lorentz surface of general type in $\RR^3_1$, parametrized by canonical isotropic coordinates. 
Then,
\begin{equation}\label{CanW_MinLorSurf_R31}
\alpha'_i= \ds\frac{\omega_i}{2 |g'_i|} \left(\,g_i^2+1\,,\,g_i^2-1\,,\,  2g_i \,\right); \qquad \omega_i=\pm 1\,,
\end{equation}
where $g_i$; $i=1;2$ are two  functions, such that: 
\begin{equation}\label{CanW_cond_MinLorSurf_R31}
g'_1(t_1)\neq 0\,; \qquad g'_2(t_2)\neq 0\,; \qquad g_1(t_1) \neq g_2(t_2)\,.
\end{equation}

 Conversely, let $g_i$; $i=1;2$ be functions, satisfying  \eqref{CanW_cond_MinLorSurf_R31} and $\omega_i=\pm 1$\,.
Then, there exists a minimal Lorentz surface of general type  in $\RR^3_1$ parametrized by canonical isotropic coordinates and
having Weierstrass-type representation \eqref{CanW_MinLorSurf_R31}. %\prfend
\end{thm}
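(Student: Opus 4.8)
The plan is to obtain this theorem as the specialization $h_i=g_i$ of the $\RR^4_2$ result Theorem~\ref{prop-CanW_MinLorSurf_R42}, combining the definition of canonical coordinates with the natural-parameter representation of a single null curve in $\RR^3_1$ (Theorem~\ref{prop-W_natparm_IsoCurv_R31}). I would treat the two directions separately, and the direct statement splits cleanly along the two generating curves.

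For the direct statement, since $\M$ is of general type and parametrized by canonical isotropic coordinates, the $\RR^3_1$ version of Definition~\ref{Can_Coord-MinLorSurf_R42-IsoCurvs} gives that $t_i$ is a natural parameter of the non-degenerate null curve $\alpha_i$ for $i=1,2$. Applying Theorem~\ref{prop-W_natparm_IsoCurv_R31} to each curve separately yields exactly \eqref{CanW_MinLorSurf_R31} with $\omega_i=\pm1$, while Proposition~\ref{IsoCurv-nondeg_fg_R31} together with the general-type hypothesis gives $g'_i\neq0$, i.e.\ the first two conditions of \eqref{CanW_cond_MinLorSurf_R31}. The remaining condition $g_1\neq g_2$ encodes the regularity of $\M$: writing $f_i=\omega_i/(2|g'_i|)$, the key computation is the scalar product
\begin{equation*}
\alpha'_1\cdot\alpha'_2 = f_1f_2\big[-(g_1^2+1)(g_2^2+1)+(g_1^2-1)(g_2^2-1)+4g_1g_2\big]=-2f_1f_2\,(g_1-g_2)^2,
\end{equation*}
so that $F=\tfrac14\,\alpha'_1\cdot\alpha'_2=-\tfrac12 f_1f_2(g_1-g_2)^2$. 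Since $F\neq0$ for a genuine Lorentz surface and $f_i\neq0$, this forces $g_1(t_1)\neq g_2(t_2)$.

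For the converse, given functions $g_i$ with $g'_i\neq0$ and $g_1\neq g_2$ together with signs $\omega_i=\pm1$, I would invoke the converse part of Theorem~\ref{prop-W_natparm_IsoCurv_R31} to construct two null curves $\alpha_1(t_1),\alpha_2(t_2)$ in $\RR^3_1$, each parametrized by a natural parameter and with $\alpha'_i$ of the prescribed form. Gluing them by $\x(t_1,t_2)=\tfrac12\big(\alpha_1(t_1)+\alpha_2(t_2)\big)$ as in \eqref{MinSurf-NullCurves_R42} produces a map with $E=G=0$ (because $\alpha'^2_i=0$) and $F=-\tfrac12 f_1f_2(g_1-g_2)^2\neq0$ by the computation above; hence $\x$ is a regular Lorentz immersion, minimal by the converse of Theorem~\ref{prop-W_MinLorSurf_R31} applied with the same $f_i=\omega_i/(2|g'_i|)$, which satisfy \eqref{W_cond_MinLorSurf_R31}. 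Since both curves are non-degenerate, $\M$ is of general type; and since both are parametrized by natural parameters, $(t_1,t_2)$ are canonical coordinates, so \eqref{CanW_MinLorSurf_R31} holds.

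I do not anticipate a real obstacle: the statement is a faithful restriction of the $\RR^4_2$ theorem to the slice $x_3=0$, equivalently $h_i=g_i$, and the third-type case that complicates Theorem~\ref{prop-CanW_MinLorSurf_R42} does not occur, since every null curve in $\RR^3_1$ satisfies ${\alpha''}^2\geq0$ and every such minimal Lorentz surface is of first type. The only step demanding care is the sign and factorization in the scalar-product identity, which is what cleanly links the algebraic non-coincidence $g_1\neq g_2$ to the geometric non-degeneracy $F\neq0$.
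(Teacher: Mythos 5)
Your proposal is correct and follows essentially the same route as the paper, which states this theorem without an explicit proof as the $g_i=h_i$ (i.e.\ $x_3=0$) specialization of Theorem~\ref{prop-CanW_MinLorSurf_R42}, derived in turn from the natural-parameter representation of single null curves (here Theorem~\ref{prop-W_natparm_IsoCurv_R31}). You merely make explicit the details the paper leaves implicit --- notably the identity $\alpha'_1\cdot\alpha'_2=-2f_1f_2(g_1-g_2)^2$ linking $g_1\neq g_2$ to $F\neq 0$, and the observation that the third-type complication cannot arise in $\RR^3_1$ --- all of which check out.
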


The Gauss curvature of  $\M$ is expressed in terms of canonical coordinates as follows:
\begin{equation}\label{K-gomega_MinLorSurf_R31}
K       =        \ds\frac{16 \omega_1\omega_2 |g'_1g'_2|g'_1g'_2}{ (g_1-g_2)^4 } \,.
\end{equation}

%==============================================================================

\section{Relation between the null curves in $\RR^4_2$ and $\RR^3_1$}\label{NullCurves_R42-NullCurves_R31}

 Let $\alpha$ be a non-degenerate null curve in $\RR^4_2$, parametrized by a natural parameter and 
having a Weierstrass-type representation \eqref{W_natparm_alphap_R42}.
Using the function  $g$, we obtain a non-degenerate 
null curve  $\alpha_g$ in $\RR^3_1$ parametrized by a natural parameter and determined by \eqref{W_natparm_alphap_R31}.
This curve $\alpha_g$ is determined up to a non-proper motion in $\RR^3_1$,  since 
$\omega_g=\pm 1$.
Analogously, the function $h$ generates a null curve $\alpha_h$ in $\RR^3_1$ parametrized by a natural parameter 
and determined up to a non-proper motion in $\RR^3_1$.
So, we obtain a map from the set of non-degenerate null curves in $\RR^4_2$ to  the
set of the ordered pairs of non-degenerate null curves in $\RR^3_1$:
\begin{equation}\label{alpha_R42-alpha_gh_R31}
		\alpha \rightarrow ( \alpha_g \,, \alpha_h ) \,.
\end{equation}
 
We will study the properties of this map under some transformations of the curves in $\RR^4_2$.

\begin{thm}\label{Invar_alpha_R42-alpha_gh_R31}
The map \eqref{alpha_R42-alpha_gh_R31} 
is invariant under a proper motion in $\RR^4_2$ and under a change of the natural parameter.
\end{thm}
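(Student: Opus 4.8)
The plan is to reduce both invariance statements to the transformation theory already established for $\RR^3_1$, by tracking how the generating functions $g$ and $h$ behave. Recall that, given $\alpha$ in natural parametrization with representation \eqref{W_natparm_alphap_R42}, the curves $\alpha_g$ and $\alpha_h$ in the pair \eqref{alpha_R42-alpha_gh_R31} are built from $g$ and from $h$ alone through \eqref{W_natparm_alphap_R31}, each being determined up to a non-proper motion. Hence it suffices to show that a proper motion of $\alpha$ in $\RR^4_2$, and a change of its natural parameter, each replace $g$ and $h$ by functions whose associated $\RR^3_1$-curves are congruent to $\alpha_g$ and $\alpha_h$, with no interchange of the two entries.

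For the first assertion I would first recall the transformation law of the triple $(f,g,h)$ under a proper motion of $\RR^4_2$, either from \cite{Krasimir_Kanchev_2022126017} or by reproducing, in the matrix (spinor) model of $\RR^4_2$, the argument used for $\RR^3_1$ in the proof of Theorem \ref{W-proper_ortoh_move_R31}: now the determinant-preserving action is $X \mapsto PXQ^{-1}$ with $P,Q \in \mathbf{SL}(2,\RR)$, the null direction $\alpha'$ corresponds to a rank-one matrix, and its column and row factors carry exactly the data of $g$ and of $h$. Consequently a proper motion of $\RR^4_2$ acts on $g$ and on $h$ by independent fractional linear transformations of determinant $\pm 1$, without swapping them. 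Substituting the natural-parameter normalization $f=\omega/(2|g'|)$ from \eqref{fg_natparm_R31} and using $|\hat g'|=|g'|/(cg+d)^2$, one checks that the induced change of $(f,g)$ is precisely one of the four laws \eqref{hatfg_fg-proper_ortoh_move_R31}, \eqref{hatfg_fg-proper_nonortoh_move_R31}, \eqref{hatfg_fg-unproper_ortoh_move_R31}, \eqref{hatfg_fg-unproper_nonortoh_move_R31}, the type being dictated by the sign of the determinant and of the resulting $\omega$. By Theorem \ref{W-proper_ortoh_move_R31} and its three analogues this forces $\alpha_{\hat g}$ to be a motion-image of $\alpha_g$; the same argument applied to $h$ gives $\alpha_{\hat h}\cong\alpha_h$, so the ordered pair is preserved.

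For the second assertion, by \eqref{Uniq-NatParam_IsoCurv} any change of natural parameter of $\alpha$ has the form $t\mapsto \epsilon\,\tilde t+c$ with $\epsilon=\pm1$ and $c$ constant. Reparametrizing shows that $g$ and $h$ are replaced by $\tilde g(\tilde t)=g(\epsilon\tilde t+c)$ and $\tilde h(\tilde t)=h(\epsilon\tilde t+c)$, since the common scaling factor $\epsilon$ of the components of $\alpha'$ cancels in the ratios defining $g$ and $h$. A direct integration of \eqref{W_natparm_alphap_R31} then yields $\alpha_{\tilde g}(\tilde t)=\epsilon\,\alpha_g(\epsilon\tilde t+c)+\mathrm{const}$: for $\epsilon=1$ this is merely a shift of the natural parameter together with a translation, while for $\epsilon=-1$ it is an orientation-reversing reparametrization composed with the antipodal map $\x\mapsto-\x$, which is exactly the non-proper motion corresponding to flipping $\omega$ in Remark \ref{rem2-W_natparm_IsoCurv_R31}. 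In either case $\alpha_{\tilde g}$ is congruent to $\alpha_g$ within the freedom already built into the map, and likewise for $\alpha_{\tilde h}$, so the map is invariant under a change of natural parameter.

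The main obstacle is the first step: setting up the matrix model of $\RR^4_2$ and verifying that a proper motion genuinely decouples into independent fractional linear transformations of $g$ and $h$ with no interchange, together with the sign bookkeeping needed to match each case to the correct one of the four $\RR^3_1$ transformation laws (keeping the natural-parameter normalization consistent throughout). Once that equivariance is in place, the remaining arguments are routine reparametrization computations.
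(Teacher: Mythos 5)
Your proposal is correct and follows essentially the same route as the paper: both reduce the motion invariance to the transformation law for $(f,g,h)$ under proper motions of $\RR^4_2$ (Theorem 5.2 of \cite{Krasimir_Kanchev_2022126017}, which you cite as your primary source, the spinor-model rederivation being an optional alternative), then invoke Theorem \ref{W-proper_ortoh_move_R31} and its analogues \eqref{hatfg_fg-proper_nonortoh_move_R31}--\eqref{hatfg_fg-unproper_nonortoh_move_R31} to conclude that $\alpha_g$ and $\alpha_h$ change only by motions in $\RR^3_1$, and both handle the parameter change via \eqref{Uniq-NatParam_IsoCurv} by noting that $g$ and $h$ transform identically for the curves in $\RR^4_2$ and $\RR^3_1$. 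Your explicit verification of the natural-parameter normalization and of the sign $\epsilon=-1$ (absorbed by the $\omega$-flip of Remark \ref{rem2-W_natparm_IsoCurv_R31}) merely fills in details the paper leaves implicit.
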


\begin{proof}
First, let us suppose that the null curve $\hat\alpha$ is obtained from $\alpha$ by a proper motion in $\RR^4_2$.
According to Theorem 5.2 in \cite{Krasimir_Kanchev_2022126017},    
the functions $\hat g$ and $\hat h$ are obtained from $g$ and $h$, respectively,
by a linear-fractional transformation of determinant $\pm 1$\,, 
and both determinants have the same sign.
Now, using Theorem \ref{W-proper_ortoh_move_R31}  we derive
that $\hat\alpha_g$ is obtained from $\alpha_g$ by a motion (possibly improper) in $\RR^3_1$ 
and analogously,  $\hat\alpha_h$ is obtained from $\alpha_h$.

Now, we suppose that the null curve $\tilde\alpha$ is obtained by 
a change of the parameter $t=t(s) $\, ($t' \neq 0$) of the curve $\alpha$ in $\RR^4_2$. 
Then, we have  $\tilde\alpha'_s = \alpha'_t t'$.
Using \eqref{W_alphap_R42} we find
\begin{equation*}\label{f_g_h_s-IsoCurv_R42}
\tilde f(s) = f(t(s)) t'(s)\,; \qquad \tilde g(s) = g(t(s))\,; \qquad \tilde h(s) = h(t(s))\,.
\end{equation*}
In particular,  change \eqref{Uniq-NatParam_IsoCurv} transforms
the functions $g$ and $h$ in the canonical Weierstrass-type representation \eqref{W_natparm_alphap_R42} as follows:
\begin{equation*}\label{g_h_s-natparm_IsoCurv_R42}
\tilde g(s) = g(\pm s + c)\,; \qquad \tilde h(s) = h(\pm s + c)\,.
\end{equation*}
Using  \eqref{W_natparm_alphap_R31} we can see that the functions  $g$ and $h$ are transformed in the same way if we consider  the null curves
in $\RR^3_1$ parametrized by a natural parameter. This proves the statement.
\end{proof}

Let the null curve $\hat\alpha$ be obtained from $\alpha$ with a non-proper motion  in $\RR^4_2$.
Then, the corresponding functions $\hat g$ and $\hat h$ are obtained by changing the places of $g$ and $h$.
Consequently, \eqref{alpha_R42-alpha_gh_R31} implies:
\begin{equation*}\label{alpha_R42-alpha_gh_R31_unproper}
		\hat\alpha \rightarrow ( \alpha_h \,, \alpha_g ) \,.
\end{equation*}

\smallskip  
 Now, consider the case of a null curve $\hat\alpha$, obtained by applying an orientation-preserving anti-isometry in $\RR^4_2$ on $\alpha$.
The corresponding functions $\hat g$ and $\hat h$ are obtained from $g$ and $h$ using a linear-fractional transformations with 
determinants of different sign. Hence, \eqref{alpha_R42-alpha_gh_R31} gives:
\begin{equation}\label{alpha_R42-alpha_gh_R31_antimov}
		\hat\alpha \rightarrow ( \alpha_g \,, \alpha_{-h} ) \,.
\end{equation}

\smallskip

 Let us consider two non-degenerate null curves  $\alpha_g$ and $\alpha_h$ in $\RR^3_1$ parametrized by natural parameters
and generated  by some  functions $g$ and $h$, respectively,  in accordance with Theorem \ref{prop-W_natparm_IsoCurv_R31}.
Let $\alpha$ be the null curve in $\RR^4_2$ obtained by the Weierstrass-type representation \eqref{W_natparm_alphap_R42}\,.
Then, according to the map  \eqref{alpha_R42-alpha_gh_R31},  $\alpha$ corresponds  exactly to the initial pair $( \alpha_g \,, \alpha_h )$\,.
Consequently, the map \eqref{alpha_R42-alpha_gh_R31} is surjective.

%==============================================================================

\section{Relation between the minimal Lorentz surfaces in $\RR^4_2$ and $\RR^3_1$}\label{MinLorSurf_R42-MinLorSurfs_R31}

In the previous section, we considered a relation between the non-degenerate null curves in $\RR^4_2$ and $\RR^3_1$.
In this section, we will find a relation between the minimal Lorentz surfaces in $\RR^4_2$ 
and the pairs of minimal Lorentz surfaces in $\RR^3_1$.
We will use the canonical isotropic coordinates for the minimal Lorentz surfaces. 
In general, the canonical coordinates exist only in a neighbourhood of a point, so our considerations will be local.
To formalize the correspondence, we will consider the pairs $(\M,p)$,
where $\M$ is a minimal Lorentz surface of general type in $\RR^4_2$ and  $p\in \M$ is a fixed point.
We will identify two pairs $(\M,p)$ and $(\hat\M, \hat p)$ in $\RR^4_2$,
if  $(\hat\M,\hat p)$ is obtained from $(\M,  p)$ by a change of the coordinates and a proper motion,
$\hat p$ and $ p$ being the corresponding points.
Analogously, we identify two similar pairs in $\RR^3_1$
if one of them is obtained from the other by a change of the coordinates and a motion (possibly non-proper) in $\RR^3_1$.

 Let $\M$ be a minimal Lorentz surface of general type in $\RR^4_2$
parametrized by canonical isotropic coordinates in a neighbourhood of a point $p\in \M$.
Suppose $p$ has vanishing coordinates and $(\alpha_1, \alpha_2)$ is the pair of null curves in $\RR^4_2$ that corresponds to $\M$.
Then, $\alpha_1$ and $\alpha_2$ are non-degenerate and parametrized by natural parameters.
Applying  \eqref{alpha_R42-alpha_gh_R31} to $\alpha_1$, 
we obtain two non-degenerate null curves $\alpha_{1g}$ and $\alpha_{1h}$ in $\RR^3_1$ parametrized by natural parameters.
Analogously, $\alpha_2$ generates two non-degenerate null curves $\alpha_{2g}$ and $\alpha_{2h}$ in $\RR^3_1$, 
parametrized by natural parameters.
Then, using  \eqref{MinSurf-NullCurves_R42} we obtain two minimal Lorentz surfaces $(\M_g,p_g)$ and $(\M_h,p_h)$
of general type in $\RR^3_1$,  $p_g$ and $p_h$ being the points in $\M_g$ and $\M_h$ with vanishing coordinates.
So, we obtain a map from the set of minimal Lorentz surfaces in $\RR^4_2$ into
the set of the ordered pairs of minimal Lorentz surfaces in $\RR^3_1$:
\begin{equation}\label{MinLorSurf_R42-MinLorSurf_R31}
		\big(\M,p\big) \rightarrow \big((\M_g,p_g),(\M_h,p_h)\big) \,.
\end{equation}

 According to Theorem \ref{Invar_alpha_R42-alpha_gh_R31}, the map 
\eqref{alpha_R42-alpha_gh_R31} is invariant under a proper motion in $\RR^4_2$ and a change of the natural parameters.
The same is true for  \eqref{MinSurf-NullCurves_R42}. 
Hence, we obtain:

\begin{thm}\label{Invar_MinLorSurf_R42-MinLorSurf_R31}
The map  \eqref{MinLorSurf_R42-MinLorSurf_R31} from 
the set of minimal Lorentz surfaces of general type in $\RR^4_2$ to
the set of ordered pairs of minimal Lorentz surfaces of general type in $\RR^3_1$
is invariant under a proper motion in $\RR^4_2$ and a change of the canonical coordinates.
\end{thm}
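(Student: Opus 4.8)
The plan is to reduce the surface-level invariance to the curve-level invariance already established in Theorem \ref{Invar_alpha_R42-alpha_gh_R31}, exploiting that both the representation \eqref{MinSurf-NullCurves_R42} and the construction of the pair $(\M_g,\M_h)$ are assembled componentwise from the two null curves $\alpha_1,\alpha_2$. Throughout, the map depends only on the derivatives $\alpha'_i$, i.e.\ only on the functions $g_i,h_i$, so additive constants in the $\alpha_i$ play no role.

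First I would treat a proper motion $\hat\x = A\x + \vb$ in $\RR^4_2$. Writing $\x = (\alpha_1+\alpha_2)/2$ as in \eqref{MinSurf-NullCurves_R42} and differentiating, the translation $\vb$ drops out and the linear part $A$ acts identically on $\alpha'_1$ and $\alpha'_2$; hence each null curve $\alpha_i$ is carried to $\hat\alpha_i = A\alpha_i + \vb$ by one and the same proper motion of $\RR^4_2$. By Theorem \ref{Invar_alpha_R42-alpha_gh_R31}, the pair $(\alpha_{ig},\alpha_{ih})$ attached to $\alpha_i$ is invariant, i.e.\ $\hat\alpha_{ig},\hat\alpha_{ih}$ differ from $\alpha_{ig},\alpha_{ih}$ only by motions in $\RR^3_1$.

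The main point, and the step I expect to require the most care, is that these induced $\RR^3_1$-motions must coincide for $i=1$ and $i=2$, so that they assemble into a \emph{single} motion of each surface $\M_g,\M_h$ rather than two unrelated ones. Here I would invoke the explicit form of the transformation: by Theorem 5.2 of \cite{Krasimir_Kanchev_2022126017} the single ambient motion $A$ transforms $g_1,g_2$ by one linear-fractional map and $h_1,h_2$ by another, with coefficients depending only on $A$. Consequently, via Theorem \ref{W-proper_ortoh_move_R31} together with \eqref{hatfg_fg-proper_nonortoh_move_R31}--\eqref{hatfg_fg-unproper_nonortoh_move_R31}, the $g$-curves $\alpha_{1g},\alpha_{2g}$ are moved by the very same motion of $\RR^3_1$ (its properness being governed by the common sign of the determinant), and likewise for the $h$-curves. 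Applying this common motion to $\x_g = (\alpha_{1g}+\alpha_{2g})/2$ gives $\hat\M_g = \M_g$ in our identification, and similarly $\hat\M_h = \M_h$; since a proper motion of $\RR^4_2$ does not interchange $g$ and $h$, the ordered pair is preserved.

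Finally I would handle a change of canonical coordinates. Because the marked point $p$ is fixed at vanishing coordinates, \eqref{Uniq-NatParam_IsoCurv} forces the additive constants to vanish and leaves only the sign changes $t_i \mapsto -t_i$ and, for surfaces of first or second type, the interchange $t_1 \leftrightarrow t_2$. A sign change is precisely a change of natural parameter of the corresponding null curve, which by Theorem \ref{Invar_alpha_R42-alpha_gh_R31} leaves $(\alpha_{ig},\alpha_{ih})$ invariant. The interchange $t_1 \leftrightarrow t_2$ swaps $\alpha_1$ and $\alpha_2$, hence swaps the two isotropic parameters on $\M_g$ and on $\M_h$ separately; by the symmetry of \eqref{MinSurf-NullCurves_R42} this is an admissible coordinate change on each of $\M_g,\M_h$ and does \emph{not} interchange them. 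In every case the marked points remain at vanishing coordinates, so $(\M_g,p_g)$ and $(\M_h,p_h)$ are unchanged and the map \eqref{MinLorSurf_R42-MinLorSurf_R31} is well defined on the equivalence classes.
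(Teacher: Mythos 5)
Your proposal is correct and follows essentially the same route as the paper, which likewise reduces the surface-level statement to the curve-level invariance of Theorem \ref{Invar_alpha_R42-alpha_gh_R31} together with the compatibility of the decomposition \eqref{MinSurf-NullCurves_R42}. In fact your write-up is more careful than the paper's two-line proof: you make explicit the key coherence point (that the single ambient motion induces, via the common linear-fractional transformation of Theorem 5.2 of \cite{Krasimir_Kanchev_2022126017} and formulas \eqref{hatfg_fg-proper_ortoh_move_R31}--\eqref{hatfg_fg-unproper_nonortoh_move_R31}, one and the same motion of $\RR^3_1$ on $\alpha_{1g}$ and $\alpha_{2g}$, so the curve-level motions assemble into a single motion of $\M_g$, and likewise for $\M_h$) and the role of the marked point in restricting coordinate changes to signs and the admissible swap, both of which the paper leaves implicit.
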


 Note that the null curves, given with \eqref{alpha_R42-alpha_gh_R31} are defined up to a sign.
If  we change the signs of both  $\alpha_{1g}$ and $\alpha_{2g}$ in the definition of $\M_g$, then 
the new surface will be obtained from   $\M_g$ by a non-proper motion.
On the other hand, if we change only the sign of one of the curves, then according to \eqref{Conj_MinSurf-IsoCurves}
we obtain a surface   conjugate to $\M_g$. The same is true for  $\M_h$.
Consequently, the induced minimal Lorentz surfaces are defined up to a non-proper motion in $\RR^3_1$ and conjugate surface.
So, we can give the following definitions:

\begin{dfn}\label{Equiv-MinLorSurf_R31}
 Two minimal Lorentz surfaces in $\RR^3_1$ are called  \emph{equivalent},
if they coincide up to a motion (possibly non-proper) or conjugation.
\end{dfn}

\begin{dfn}\label{Equiv-MinLorSurf_R42}
Two minimal Lorentz surfaces in $\RR^4_2$ are called \emph{equivalent} 
if one of them coincides with the other (or with the conjugate of the other) up to a proper motion and an orientation-preserving anti-isometry. 
\end{dfn}

\begin{thm}\label{Equiv_MinLorSurf_R42-Equiv_MinLorSurf_R31}
Let $(\hat\M, \hat p)$ and $(\M,p)$ be equivalent minimal Lorentz surfaces in $\RR^4_2$ and $\big((\hat\M_g,\hat p_g),\ (\hat\M_h,\hat p_h)\big)$ and $\big((\M_g,p_g),\ (\M_h,p_h)\big)$ be 
the corresponding pairs of minimal Lorentz surfaces in $\RR^3_1$.
Then, $(\hat\M_g,\hat p_g)$  and $(\M_g,p_g)$  (resp. $(\hat\M_h,\hat p_h)$) and $(\M_h,p_h)$) are equivalent.
\end{thm}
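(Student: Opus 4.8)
The plan is to reduce the statement to the elementary operations generating the equivalence of Definition \ref{Equiv-MinLorSurf_R42} — a proper motion, an orientation-preserving anti-isometry, and conjugation — and to verify the claim for each, keeping the two factors in order. Because the composition of two orientation-preserving anti-isometries of $\RR^4_2$ is an orientation-preserving isometry, i.e. a proper motion, it is enough to treat one fixed anti-isometry, for which I would take the coordinate swap $\Phi_0(x_1,x_2,x_3,x_4)=(x_2,x_1,x_4,x_3)$; a general orientation-preserving anti-isometry is $\Phi_0$ followed by a proper motion. I would argue throughout at the level of the null curves $(\alpha_1,\alpha_2)$, since each operation acts on $\M$ solely through its action on $\alpha_1,\alpha_2$ in \eqref{MinSurf-NullCurves_R42}, while $\M_g,\M_h$ are built from the induced $\RR^3_1$-curves by the same formula.

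The proper-motion case is precisely Theorem \ref{Invar_MinLorSurf_R42-MinLorSurf_R31}. For $\Phi_0$ a short computation with the defining formulas of Theorem \ref{prop-W_IsoCurv_R42} shows that it fixes each $g_i$ and sends each $h_i$ to $-h_i$ (flipping only the sign of $f_i$), in agreement with \eqref{alpha_R42-alpha_gh_R31_antimov}. Hence on the $g$-side the induced curves are unchanged and $\hat\M_g=\M_g$, while substituting $-h_i$ into \eqref{W_natparm_alphap_R31} negates only the third coordinate, i.e. composes $\alpha_{ih}$ with the reflection $R:(x_1,x_2,x_3)\mapsto(x_1,x_2,-x_3)$, a non-proper isometry of $\RR^3_1$. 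Since $R$ is applied to both $\alpha_{1h}$ and $\alpha_{2h}$, formula \eqref{MinSurf-NullCurves_R42} gives $\hat\M_h=R\,\M_h$; thus $\hat\M_g\sim\M_g$ and $\hat\M_h\sim\M_h$.

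For conjugation, denote by $\M^{*}$ the conjugate surface; by \eqref{Conj_MinSurf-IsoCurves} it corresponds to the pair $(\alpha_1,-\alpha_2)$. Negating $\alpha_2'$ leaves $g_2,h_2$ unchanged and flips only the sign of $f_2$, again by Theorem \ref{prop-W_IsoCurv_R42}, so the induced $\RR^3_1$-curves are generated by the same functions. By the sign freedom recorded before Definition \ref{Equiv-MinLorSurf_R31}, the second constituent curve of $\M^{*}_g$ is either $\alpha_{2g}$ or $-\alpha_{2g}$: in the first case $\M^{*}_g=\M_g$, in the second $\M^{*}_g$ is the $\RR^3_1$-conjugate of $\M_g$. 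Either way $\M^{*}_g\sim\M_g$, and the same reasoning gives $\M^{*}_h\sim\M_h$.

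The main obstacle is not any single computation but controlling the ambiguities built into the map. First, the conclusion asserts that the order of the factors is preserved, so one must check that none of the three operations interchanges $\M_g$ and $\M_h$; this is guaranteed because none of the three allowed operations swaps $g$ and $h$ — only a non-proper motion of $\RR^4_2$ would, and that is deliberately excluded from Definition \ref{Equiv-MinLorSurf_R42}. Second, the induced curves carry a free sign $\omega_i=\pm 1$, so the images are defined only up to the non-proper motions and conjugations built into Definition \ref{Equiv-MinLorSurf_R31}; one must confirm that each operation changes these signs in a way absorbed by that equivalence. Finally, for the proper-motion part one uses that a single ambient motion induces, via Theorem \ref{W-proper_ortoh_move_R31} and its variants \eqref{hatfg_fg-proper_nonortoh_move_R31}--\eqref{hatfg_fg-unproper_nonortoh_move_R31}, one common $\RR^3_1$-motion on both $\alpha_{1g},\alpha_{2g}$, so that the per-curve data assemble through \eqref{MinSurf-NullCurves_R42} into one motion of the surface; as every operation fixes the point of vanishing coordinates, the base points $p_g,p_h$ correspond as well.
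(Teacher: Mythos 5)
Your proposal is correct and follows essentially the same route as the paper's proof: reduce to the three generating operations of Definition \ref{Equiv-MinLorSurf_R42}, dispose of proper motions by Theorem \ref{Invar_MinLorSurf_R42-MinLorSurf_R31}, handle the anti-isometry via $h_i\mapsto -h_i$ realized as a common (non-proper) motion of $\RR^3_1$ as in \eqref{alpha_R42-alpha_gh_R31_antimov} with \eqref{hatfg_fg-proper_nonortoh_move_R31}--\eqref{hatfg_fg-unproper_ortoh_move_R31}, and handle conjugation by noting via \eqref{Conj_MinSurf-IsoCurves} that $\alpha_2\mapsto-\alpha_2$ leaves $g_2,h_2$ unchanged. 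Your only deviations are cosmetic concretizations --- the explicit representative $\Phi_0$ for the anti-isometry and the explicit reflection $R:(x_1,x_2,x_3)\mapsto(x_1,x_2,-x_3)$ where the paper cites its transformation formulas --- plus a welcome spelling-out of the $\omega_i$-sign and factor-order bookkeeping that the paper leaves implicit.
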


\begin{proof}
Let  $(\hat\M, \hat p)$ and $(\M,p)$ be minimal Lorentz surface in $\RR^4_2$ which are equivalent.  
Theorem \ref{Invar_MinLorSurf_R42-MinLorSurf_R31} implies that if  $(\hat\M, \hat p)$ and $(\M,p)$
are related by a proper motion, then they give rise to equivalent surfaces in $\RR^3_1$.

Let $(\hat\M, \hat p)$ and $(\M,p)$ be related by an orientation-preserving anti-isometry in $\RR^4_2$\,.
According to \eqref{alpha_R42-alpha_gh_R31_antimov},  
the corresponding functions $\hat h_1$ and $\hat h_2$ are obtained by changing the sign in  $h_1$ and $h_2$. 
Consequently, \eqref{MinLorSurf_R42-MinLorSurf_R31} imp[lies
\begin{equation}\label{MinLorSurf_R42-MinLorSurf_gh_R31_antimov}
		\big(\hat\M, \hat p \big) \rightarrow \big((\M_g,p_g),(\M_{-h},p_{-h})\big) \,.
\end{equation}
Note that, according to \eqref{hatfg_fg-proper_nonortoh_move_R31} and \eqref{hatfg_fg-unproper_ortoh_move_R31}, 
if we change the signs of $h_1$ and $h_2$, then the corresponding curves are obtained by a motion (possibly non-proper) in $\RR^3_1$\,.
Hence, the surface $(\M_{-h},p_{-h})$ is obtained from $(\M_h,p_h)$ by the same motion in $\RR^3_1$\,.
Consequently, every orientation-preserving anti-isometry in $\RR^4_2$\,,
gives a pair of surfaces in $\RR^3_1$, which is equivalent to $ \big((\M_g,p_g),(\M_{h},p_{h})\big)$.

 Finally, we suppose that $(\hat\M, \hat p)$ is conjugate to $(\M,p)$.
Then, using  \eqref{Conj_MinSurf-IsoCurves}, we get that  the corresponding curves satisfy $\hat\alpha_1=\alpha_1$, $\hat\alpha_2=-\alpha_2$\,.
Using \eqref{CanW_MinLorSurf_R42},  we see that under a change of the sign of $\alpha_2$,
the functions $g_2, h_2$ do not change.
Consequently, $(\hat\M, \hat p)$ and $(\M,p)$ produce two equivalent pairs of surfaces in $\RR^3_1$\,.
\end{proof}

\begin{thm}\label{Bijektion_R42-R31}
The map \eqref{MinLorSurf_R42-MinLorSurf_R31} gives a bijection between  
the classes of equivalent minimal Lorentz surfaces in $\RR^4_2$ and
the pairs of classes of equivalent minimal Lorentz surfaces in $\RR^3_1$\,.
\end{thm}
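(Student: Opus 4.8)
The plan is to verify that the map \eqref{MinLorSurf_R42-MinLorSurf_R31}, which by Theorem~\ref{Equiv_MinLorSurf_R42-Equiv_MinLorSurf_R31} already descends to a well-defined map on equivalence classes, is both surjective and injective. Throughout I would use the canonical Weierstrass data as a bookkeeping device: by Theorem~\ref{prop-CanW_MinLorSurf_R42} a class $[\M]$ in $\RR^4_2$ is encoded by the four real functions $g_1,g_2,h_1,h_2$ subject to \eqref{CanW_cond_MinLorSurf_R42}, while by Theorem~\ref{prop-CanW_MinLorSurf_R31} the factor $\M_g$ is encoded by $(g_1,g_2)$ and $\M_h$ by $(h_1,h_2)$, each subject to \eqref{CanW_cond_MinLorSurf_R31}. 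The whole argument then reduces to comparing the ambiguities of these data on the two sides.

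For surjectivity I would start from an arbitrary pair of classes of minimal Lorentz surfaces of general type in $\RR^3_1$, choose canonical representatives vanishing at the base points, and read off their Weierstrass functions $(g_1,g_2)$ and $(h_1,h_2)$ via Theorem~\ref{prop-W_natparm_IsoCurv_R31}. Using the freedom $t=\pm s+c$ of \eqref{Uniq-NatParam_IsoCurv} I would align the natural parameters so that $g_i$ and $h_i$ become functions of common canonical coordinates $(t_1,t_2)$, the constants $c$ being fixed by the requirement that the base point have vanishing coordinates. The conditions in \eqref{CanW_cond_MinLorSurf_R31} give $g_i'h_i'\neq0$, $g_1\neq g_2$ and $h_1\neq h_2$, i.e.\ exactly \eqref{CanW_cond_MinLorSurf_R42}, so Theorem~\ref{prop-CanW_MinLorSurf_R42} produces a surface $\M$ in $\RR^4_2$ whose image under \eqref{MinLorSurf_R42-MinLorSurf_R31} is, by construction and the surjectivity of the null-curve map established at the end of Section~\ref{NullCurves_R42-NullCurves_R31}, the prescribed pair.

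Injectivity is the converse of Theorem~\ref{Equiv_MinLorSurf_R42-Equiv_MinLorSurf_R31} and is where the real work lies. Given $\M$ and $\hat\M$ whose images are equivalent, I would translate the two $\RR^3_1$-equivalences into the Weierstrass dictionary: by Theorem~\ref{W-proper_ortoh_move_R31} together with \eqref{hatfg_fg-proper_nonortoh_move_R31}--\eqref{hatfg_fg-unproper_nonortoh_move_R31}, the relation $[\M_g]=[\hat\M_g]$ means that $(\hat g_1,\hat g_2)$ is obtained from $(g_1,g_2)$ by one common linear-fractional map of determinant $\pm1$, possibly after a numeration of the two curves and the sign changes recorded in those formulas; and likewise for the $h$-data. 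I would then reassemble the two linear-fractional maps into a single transformation of $\M$: comparing with the transformation law of Theorem~5.2 of \cite{Krasimir_Kanchev_2022126017} and with \eqref{alpha_R42-alpha_gh_R31_antimov}, equal determinant signs give a proper motion and opposite signs an orientation-preserving anti-isometry in $\RR^4_2$, while the conjugation sign changes match conjugation of $\M$.

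The one point that is not merely formal, and which I expect to be the main obstacle, is that the numeration (the swap $t_1\leftrightarrow t_2$) cannot be chosen independently on the two factors; if it could, a ``cross-matching'' of the $g$- and $h$-data would produce inequivalent surfaces in $\RR^4_2$ with the same image. This is resolved by the shared parametrization: since $\alpha_i$ is parametrized by a natural parameter, formula \eqref{W_natparm_alphap_R31} gives ${\alpha_{ig}''}^2={\alpha_{ih}''}^2=1$, so the natural parameter of $\alpha_i$ is simultaneously a natural parameter of both $\alpha_{ig}$ and $\alpha_{ih}$. Hence $\M_g$ and $\M_h$ carry the same canonical coordinates inherited from $\M$, and any coordinate identification matching $\M_g$ to $\hat\M_g$ must match $\M_h$ to $\hat\M_h$ through the very same permutation of $(t_1,t_2)$. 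This synchronization forces the two numerations to agree, so the reassembled transformation is a genuine equivalence of $\M$ with $\hat\M$ in the sense of Definition~\ref{Equiv-MinLorSurf_R42}; together with Theorem~\ref{Invar_MinLorSurf_R42-MinLorSurf_R31} this yields the asserted bijection.
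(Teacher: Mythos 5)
Your surjectivity and injectivity arguments coincide with the paper's own proof: surjectivity by reading off canonical Weierstrass data via Theorem~\ref{prop-CanW_MinLorSurf_R31} and assembling them through Theorem~\ref{prop-CanW_MinLorSurf_R42}, and injectivity by translating the $\RR^3_1$-equivalences into linear-fractional transformations of determinant $\pm 1$ via \eqref{hatfg_fg-proper_ortoh_move_R31}--\eqref{hatfg_fg-unproper_nonortoh_move_R31} and then invoking Theorem~5.2 of \cite{Krasimir_Kanchev_2022126017} (equal determinant signs yield a proper motion, opposite signs an orientation-preserving anti-isometry via \eqref{MinLorSurf_R42-MinLorSurf_gh_R31_antimov}), with conjugation handled through \eqref{Conj_MinSurf-IsoCurves} and \eqref{CanW_MinLorSurf_R31}. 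Up to this point the proposal is a faithful reconstruction of the paper's argument.

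The one place where you go beyond the paper is the synchronization of the numerations, and there your argument does not hold up. You correctly sense that a cross-matched identification --- matching $\M_g$ to $\hat\M_g$ through the identity permutation of $(t_1,t_2)$ while matching $\M_h$ to $\hat\M_h$ through the swap --- would be dangerous, since none of the operations allowed on $\M$ (change of canonical coordinates, proper motion, conjugation, orientation-preserving anti-isometry) permutes the $h$-data independently of the $g$-data. But your resolution begs the question: the observation that $\M_g$ and $\M_h$ inherit common canonical coordinates from $\M$ (via ${\alpha_{ig}''}^2={\alpha_{ih}''}^2=1$) constrains the \emph{definition} of the map \eqref{MinLorSurf_R42-MinLorSurf_R31}, not the equivalences in the codomain. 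An element of the codomain is merely an ordered pair of equivalence classes; the two equivalences $[\hat\M_g]=[\M_g]$ and $[\hat\M_h]=[\M_h]$ are realized by motions and coordinate changes chosen \emph{independently}, so nothing forces them to use ``the very same permutation.'' After normalizing the $g$-side permutation to the identity (by renumbering $\hat\M$'s own coordinates, which swaps the $\hat g$- and $\hat h$-numerations simultaneously), the $h$-side permutation may still be the swap; the same remark applies to independent sign choices $t_i\to -t_i$ on the two factors. This residual case is excluded neither by your sentence nor, it should be said, by the paper's own proof, which tacitly writes $\hat g_i$ against $g_i$ and $\hat h_i$ against $h_i$ with matching indices (only for surfaces of third type is the numeration actually pinned down, by $g'_1h'_1>0$, $g'_2h'_2<0$). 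So you identified precisely the delicate point that the paper passes over in silence, but the claim that the two permutations must agree is an assertion, not a proof, and as written it leaves the injectivity argument with the same gap as the original.
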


\begin{proof}
Let us consider two minimal Lorentz surfaces of general type  $(\M_g,p_g)$ and 
$(\M_h,p_h)$ in $\RR^3_1$ parametrized by canonical coordinates.
Applying Theorem \ref{prop-CanW_MinLorSurf_R31}, we obtain two pairs of functions $(g_i,h_i)$; $i=1;2$
satisfying  \eqref{CanW_cond_MinLorSurf_R31} for $g_i$ and $h_i$\,.
Then, conditions   \eqref{CanW_cond_MinLorSurf_R42} are satisfied.
Now,  \eqref{CanW_MinLorSurf_R42} defines 
a minimal Lorentz surface of general type $(\M,p)$ in $\RR^4_2$ parametrized by canonical parameters,
$p$ being the point in $\M$ with vanishing coordinates.
It is easy to see that  \eqref{MinLorSurf_R42-MinLorSurf_R31} transforms $(\M,p)$ 
in $\big( (\M_g,p_g),(\M_h,p_h) \big)$\,. 
Consequently, the map  \eqref{MinLorSurf_R42-MinLorSurf_R31} is surjective.

 Now, we suppose that the surfaces $(\hat\M_g,\hat p_g)$ and $(\M_g,p_g)$ in $\RR^3_1$ are equivalent
and analogously, $(\hat\M_h,\hat p_h)$ and $(\M_h,p_h)$\, are equivalent.
Note, that according to \eqref{Conj_MinSurf-IsoCurves} and \eqref{CanW_MinLorSurf_R31},
the change of $(\M_g,p_g)$ or $(\M_h,p_h)$ with its conjugate
does not change the corresponding functions $g_i$ and $h_i$, $i=1;2$.

Let $(\hat\M_g,\hat p_g)$ be obtained from $(\M_g,p_g)$ by a motion (possibly non-proper) in $\RR^3_1$ and
analogously for $(\hat\M_h,\hat p_h)$ and $(\M_h,p_h)$\,.
Then, 
\eqref{hatfg_fg-proper_ortoh_move_R31},  \eqref{hatfg_fg-proper_nonortoh_move_R31},
\eqref{hatfg_fg-unproper_ortoh_move_R31}, and \eqref{hatfg_fg-unproper_nonortoh_move_R31} imply that
the functions $\hat g_i$ and $\hat h_i$ are related with $g_i$, resp. $h_i$ for $i=1;2$
by linear-fractional maps with  determinant $\pm 1$\,. 
According to Theorem 5.2 in \cite{Krasimir_Kanchev_2022126017}\,,
if the determinants for $\hat g_i$ and $\hat h_i$ have the same sign,
then the corresponding surface $(\hat\M, \hat p)$ in $\RR^4_2$ can be obtained by a proper motion  
from $(\M,p)$ or from its conjugate.
If the determinants have different signs, then having in mind  \eqref{MinLorSurf_R42-MinLorSurf_gh_R31_antimov},
$(\hat\M, \hat p)$ can be obtained from  $(\M,p)$ or its conjugate by an orientation-preserving anti-isometry in $\RR^4_2$\,.
Consequently, the corresponding surfaces $(\hat\M, \hat p)$ and $(\M,p)$ are equivalent, thus proving the injectivity.
\end{proof}

In what follows, we obtain a relation between the curvatures (Gauss curvature $K$ and
normal curvature $\varkappa$) of a minimal Lorentz surface of general type  $(\M,p)$ in $\RR^4_2$ 
and the Gauss curvatures $K_g$ and $K_h$ of the corresponding minimal Lorentz surfaces in $\RR^3_1$.

\begin{thm}\label{thm_K_kappa-KgKh_MinLorSurf_R42}
 Let $(\M,p)$ be a minimal Lorentz surface of general type in $\RR^4_2$ with 
Gauss curvature $K$ and  normal curvature  $\varkappa$.
Let   $(\M_g,p_g)$ and $(\M_h,p_h)$ be the corresponding minimal surfaces in $\RR^3_1$ with
Gauss curvatures $K_g$ and $K_h$, respectively.
\begin{enumerate}
	\item If $\M$ is of first or second type, then
	\begin{equation}\label{K_kappa_1-KgKh_MinLorSurf_R42}
K         = \eta  \sqrt[4]{|K_gK_h|} \: \frac {\sqrt{|K_g|} + \sqrt{|K_h|} }{2}\;; \quad
\varkappa = \eta  \sqrt[4]{|K_gK_h|} \: \frac {\sqrt{|K_g|} - \sqrt{|K_h|} }{2}\;, \;\; \eta=\sign K\,.
\end{equation}
	\item If $\M$ is of third type, then 
	\begin{equation}\label{K_kappa_3-KgKh_MinLorSurf_R42}
K         = \eta  \sqrt[4]{|K_gK_h|} \: \frac {\sqrt{|K_g|} - \sqrt{|K_h|} }{2}\;; \quad
\varkappa = \eta  \sqrt[4]{|K_gK_h|} \: \frac {\sqrt{|K_g|} + \sqrt{|K_h|} }{2}\;, \;\; \eta=\sign\varkappa\,.
\end{equation}
\end{enumerate}
\end{thm}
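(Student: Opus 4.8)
The plan is a direct computation from the canonical Weierstrass expressions, followed by careful sign analysis. Both invariants $K,\varkappa$ of $\M$ and the Gauss curvatures $K_g,K_h$ are preserved under the equivalences used to define the map \eqref{MinLorSurf_R42-MinLorSurf_R31}, so I may work with one fixed canonical representative; as noted in Section \ref{S:Prelim}, I may moreover choose the canonical coordinates so that $F<0$, making \eqref{K_kappa-gh_MinLorSurf_R42} available. Setting
\[
P = \frac{g'_1 g'_2}{(g_1-g_2)^2}, \qquad Q = \frac{h'_1 h'_2}{(h_1-h_2)^2},
\]
I would factor the common coefficient in \eqref{K_kappa-gh_MinLorSurf_R42} over the $g$- and $h$-data, obtaining $\ds\frac{8\sqrt{|g'_1 h'_1 g'_2 h'_2|}}{|(g_1-g_2)(h_1-h_2)|}=8\sqrt{|P|}\,\sqrt{|Q|}=8\sqrt{|PQ|}$, so that
\[
K = 8\sqrt{|PQ|}\,(P+Q), \qquad \varkappa = 8\sqrt{|PQ|}\,(P-Q).
\]

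Next I would bring in the surfaces $\M_g,\M_h$. Since $\M$ is of general type, Proposition \ref{IsoCurv-nondeg_fgh_R42} gives $g'_1,g'_2,h'_1,h'_2\neq 0$, so by the construction of Section \ref{NullCurves_R42-NullCurves_R31} the pair $(t_1,t_2)$ is canonical for the general-type surfaces $\M_g$ and $\M_h$ as well. Hence \eqref{K-gomega_MinLorSurf_R31} yields $|K_g|=16P^2$ and $|K_h|=16Q^2$, so that $|P|=\tfrac14\sqrt{|K_g|}$, $|Q|=\tfrac14\sqrt{|K_h|}$, and consequently $8\sqrt{|PQ|}=2\sqrt[4]{|K_gK_h|}$. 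At this point the magnitudes of the two sides already match, and only the signs of $P$ and $Q$ remain to be settled.

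For the signs I would use ${\alpha''_i}^2 = 4f_i^2 g'_i h'_i$, so that $\sign {\alpha''_i}^2 = \sign(g'_i h'_i)$, together with Definition \ref{Def-MinSurf_kind123-IsoCurvs}: for the first and second types $g'_1 h'_1$ and $g'_2 h'_2$ have equal signs, forcing $\sign(PQ)=+1$, whereas for the third type they have opposite signs, so $\sign(PQ)=-1$ (in agreement with Proposition \ref{DegP_kind123-K_kappa-tl}). In the first/second case $P$ and $Q$ share a common sign $\varepsilon$; writing $P=\varepsilon\tfrac14\sqrt{|K_g|}$, $Q=\varepsilon\tfrac14\sqrt{|K_h|}$ and substituting produces \eqref{K_kappa_1-KgKh_MinLorSurf_R42}, where $P+Q\neq 0$ shows $\varepsilon=\sign K$, giving $\eta=\sign K$. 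In the third case $P$ and $Q$ have opposite signs; writing $P=\delta\tfrac14\sqrt{|K_g|}$, $Q=-\delta\tfrac14\sqrt{|K_h|}$ yields \eqref{K_kappa_3-KgKh_MinLorSurf_R42}, where now $P-Q\neq 0$ shows $\delta=\sign\varkappa$, giving $\eta=\sign\varkappa$.

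The one genuine difficulty is the sign bookkeeping. I expect the main obstacle to be verifying that a single scalar $\eta$ encodes all the sign data and, crucially, that the choice $\eta=\sign K$ in the first/second case and $\eta=\sign\varkappa$ in the third case is forced rather than arbitrary: in each case exactly one of $K,\varkappa$ is guaranteed nonzero — because the complementary combination $P\mp Q$ may vanish when $|K_g|=|K_h|$ — which is precisely why the two cases require different normalizing signs and why the roles of $K$ and $\varkappa$ are interchanged between \eqref{K_kappa_1-KgKh_MinLorSurf_R42} and \eqref{K_kappa_3-KgKh_MinLorSurf_R42}.
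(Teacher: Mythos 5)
Your proposal is correct and follows essentially the same route as the paper: assume $F<0$, apply the canonical formulas \eqref{K_kappa-gh_MinLorSurf_R42} and \eqref{K-gomega_MinLorSurf_R31}, substitute $\sqrt{|K_g|}=4|g'_1g'_2|/(g_1-g_2)^2$, $\sqrt{|K_h|}=4|h'_1h'_2|/(h_1-h_2)^2$, and track a single sign $\eta$, concluding by the invariance from Theorem \ref{Invar_MinLorSurf_R42-MinLorSurf_R31}. The only (harmless) variation is that you read off the sign dichotomy $\sign(g'_1g'_2\,h'_1h'_2)=\pm1$ directly from Definition \ref{Def-MinSurf_kind123-IsoCurvs} via ${\alpha''_i}^2=4f_i^2g'_ih'_i$, whereas the paper deduces it from the characterization $|K|\gtrless|\varkappa|$ of Proposition \ref{DegP_kind123-K_kappa-tl} together with \eqref{K_kappa-gh_MinLorSurf_R42}; your closing observation that $K\neq0$ (resp. $\varkappa\neq0$) is the invariant forced to be nonzero in each case correctly pinpoints why $\eta=\sign K$ and $\eta=\sign\varkappa$ are the right normalizations.
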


\begin{proof}
For simplicity we suppose that the first fundamental form of $\M$ satisfies $F<0$\,.
Let  $\M$ be of first or second type, i.e.  $|K|>|\varkappa|$\,.
Then, \eqref{K_kappa-gh_MinLorSurf_R42} implies that $g'_1g'_2$ and $h'_1h'_2$ have the same sign,
which coincides with the sign of $K$. Denote this sign by $\eta$.
Then, we have  $g'_1g'_2 = \eta |g'_1g'_2|$,  $h'_1h'_2 = \eta |h'_1h'_2|$.
So, equalities  \eqref{K_kappa-gh_MinLorSurf_R42} take the following form:
\begin{equation}\label{K_kappa_1-etagh_MinLorSurf_R42}
\begin{array}{llr}
K         &=& \ds\frac{ 8 \eta \sqrt{|g'_1h'_1g'_2h'_2|}}{|(g_1-g_2)(h_1-h_2)|}
              \left(\ds\frac{|g'_1g'_2|}{(g_1-g_2)^2}+\ds\frac{|h'_1h'_2|}{(h_1-h_2)^2}\right);\\[3ex]
\varkappa &=& \ds\frac{ 8 \eta \sqrt{|g'_1h'_1g'_2h'_2|}}{|(g_1-g_2)(h_1-h_2)|}
              \left(\ds\frac{|g'_1g'_2|}{(g_1-g_2)^2}-\ds\frac{|h'_1h'_2|}{(h_1-h_2)^2}\right).
\end{array}
\end{equation}

 Let  $(\M_g,p_g)$ and $(\M_h,p_h)$ be the minimal Lorentz surfaces in $\RR^3_1$ that correspond to  $(\M,p)$ 
according to \eqref{MinLorSurf_R42-MinLorSurf_R31}\,. Then, by use of 
\eqref{K-gomega_MinLorSurf_R31} we obtain that the Gauss curvatures $K_g$ and $K_h$ are expressed as follows:
\begin{equation}\label{sqrt_K-g_h_MinLorSurf_R31}
\sqrt{|K_g|}  =  \ds\frac{4 |g'_1g'_2|}{ (g_1-g_2)^2 } \,; \qquad
\sqrt{|K_h|}  =  \ds\frac{4 |h'_1h'_2|}{ (h_1-h_2)^2 } \,.
\end{equation}
Hence, using \eqref{K_kappa_1-etagh_MinLorSurf_R42}, we obtain \eqref{K_kappa_1-KgKh_MinLorSurf_R42}\,.

 Now, let $\M$ be of third type.
According to Theorem \ref{DegP_kind123-K_kappa-tl},  this is equivalent to $|\varkappa|>|K|$\,.
Then, \eqref{K_kappa-gh_MinLorSurf_R42} implies that  $g'_1g'_2$ and $h'_1h'_2$ have different signs.
If $\varkappa>0$\,, we obtain $g'_1g'_2>0$ and $h'_1h'_2<0$\,.
If $\varkappa<0$\,, we get $g'_1g'_2<0$ and $h'_1h'_2>0$\,.
Denote $\eta=\sign\varkappa$.
Then $g'_1g'_2 = \eta |g'_1g'_2|$ and $h'_1h'_2 = - \eta |h'_1h'_2|$.
Now, using  \eqref{K_kappa-gh_MinLorSurf_R42},  we derive the following expressions of $K$ and $\varkappa$:
\begin{equation}\label{K_kappa_3-etagh_MinLorSurf_R42}
\begin{array}{llr}
K         &=& \ds\frac{ 8 \eta \sqrt{|g'_1h'_1g'_2h'_2|}}{|(g_1-g_2)(h_1-h_2)|}
              \left(\ds\frac{|g'_1g'_2|}{(g_1-g_2)^2}-\ds\frac{|h'_1h'_2|}{(h_1-h_2)^2}\right);\\[3ex]
\varkappa &=& \ds\frac{ 8 \eta \sqrt{|g'_1h'_1g'_2h'_2|}}{|(g_1-g_2)(h_1-h_2)|}
              \left(\ds\frac{|g'_1g'_2|}{(g_1-g_2)^2}+\ds\frac{|h'_1h'_2|}{(h_1-h_2)^2}\right).
\end{array}
\end{equation}
From \eqref{sqrt_K-g_h_MinLorSurf_R31} and \eqref{K_kappa_3-etagh_MinLorSurf_R42} we obtain \eqref{K_kappa_3-KgKh_MinLorSurf_R42}\,.

So, we proved formulas \eqref{K_kappa_1-KgKh_MinLorSurf_R42} and \eqref{K_kappa_3-KgKh_MinLorSurf_R42} in the case $F<0$.
But, according to Theorem \ref{Invar_MinLorSurf_R42-MinLorSurf_R31}, the map \eqref{MinLorSurf_R42-MinLorSurf_R31}
do not depend on the canonical coordinates of  $\M$.
The curvatures $K$, $\varkappa$, $K_g$ and $K_h$ are also invariant. This proves the theorem.
\end{proof}

Now, we will obtain a relation between the first fundamental forms of $(\M,p)$ and
its corresponding surfaces $(\M_g,p_g)$, $(\M_h,p_h)$\,.
Using  \eqref{F-gh_MinLorSurf_R42-0}, we have 
\begin{equation}\label{F-gh_MinLorSurf_R42-1}
|F|=\frac{ |(g_1-g_2)(h_1-h_2)| }{8\sqrt{|g'_1h'_1g'_2h'_2|}}\:,
\end{equation}
 $F$ being the non-vanishing coefficient of the first fundamental form of $(\M,p)$\,.
Analogously, for the coefficients $F_g$ and $F_h$ of the surfaces  $(\M_g,p_g)$ and $(\M_h,p_h)$ we have
\begin{equation}\label{F-g_MinLorSurf_R31-1}
|F_g|=\frac{ (g_1-g_2)^2 }{8|g'_1g'_2|}\:;  \qquad  |F_h|=\frac{ (h_1-h_2)^2 }{8|h'_1h'_2|}\:.
\end{equation}
From \eqref{F-gh_MinLorSurf_R42-1} and \eqref{F-g_MinLorSurf_R31-1} we obtain:
$$
	|F| = \sqrt{|F_gF_h|}\,.
$$

\vskip 3mm
 As an application of the results presented above, at the end of this section, we give an example showing how  we can obtain a minimal Lorentz surface in $\RR^4_2$ from two given minimal Lorentz surfaces in $\RR^3_1$.

\begin{example}
 Let us consider the following Lorentz hyperbolic catenoids in $\RR^3_1$ (see \cite{Anciaux-1,Cintra-Onnis}):

\begin{itemize}
	\item $\M_a$\ :\; $x_2^2-x_1^2=\cosh^2(x_3)$ -- \emph{Lorentz hypebolic catenoid of  first kind};
	\item $\M_b$\ :\; $x_1^2-x_2^2=\sinh^2(x_3)$ -- \emph{Lorentz hypebolic catenoid of second kind}.
\end{itemize} 
 
They are parametrized respectively by:
\[
\x_a = (\sinh(u) \cosh(v), \cosh(u) \cosh(v), v); \quad \x_b = (\sinh(u) \cosh(v), \sinh(u) \sinh(v), u).
\]
It is easy to see that in both cases $\M_a$ and $\M_b$ are minimal Lorentz surfaces in $\RR^3_1$ parametrized by  isothermal coordinates $(u,v)$.
Then, we can obtain  the parametrization of $\M_a$ and $\M_b$ in terms of isotropic coordinates $(t_1,t_2)$ and in accordance with \eqref{MinSurf-NullCurves_R42} we denote the corresponding pairs of isotropic curves  by $(\alpha_1,\alpha_2)$ and $(\beta_1,\beta_2)$, respectively. 
With respect to the isotropic parameters the corresponding tangent vectors are:
\[
\begin{array}{l}
	\M_a:\;  \alpha'_1 = (\cosh(t_1), \sinh(t_1), 1);  \quad    \alpha'_2 = (\cosh(t_2), \sinh(t_2), -1);\\[0.5ex]
  \M_b:\;  \beta'_1 = (\cosh(t_1), \sinh(t_1), 1);  \quad  \!\beta'_2 = (\cosh(t_2), -\sinh(t_2), 1).
\end{array}
\]
Hence, $(\alpha''_1)^2=(\alpha''_2)^2=(\beta''_1)^2=(\beta''_2)^2=1$\, and according to Definition \ref{Can_Coord-MinLorSurf_R42-IsoCurvs}, 
$(t_1,t_2)$ are canonical isotropic coordinates of $\M_a$ and $\M_b$.

 We denote by $g_1$ and $g_2$ the functions that generate $\alpha_1$ and $\alpha_2$, respectively,  and
analogously, we denote by $h_1$ and $h_2$ the functions corresponding to  $\beta_1$ and $\beta_2$, respectively.
Applying \eqref{fg_alphap_R31} for each curve we obtain that:
\[
g_1(t_1)=\ee^{t_1}\,; \quad g_2(t_2)=-\ee^{t_2}\,; \quad h_1(t_1)=\ee^{t_1}\,; \quad h_2(t_2)=\ee^{-t_2}\,.
\]
Let us note that conditions \eqref{CanW_cond_MinLorSurf_R31} for $\M_a$ give no restriction on $(t_1,t_2)$.  
On the other hand,  conditions \eqref{CanW_cond_MinLorSurf_R31} for $\M_b$ imply $t_1+t_2 \neq 0$\,.

 Let $\gamma_1$ be the isotropic curve in $\RR^4_2$, defined by the Weierstrass-type representation \eqref{W_natparm_alphap_R42} with
the pair $(g_1,h_1)$\, and $\omega_1=1$\,.
Analogously, we denote by  $\gamma_2$ the isotropic curve determined by \eqref{W_natparm_alphap_R42} with $(g_2,h_2)$ and $\omega_2=1$\,. Then, we have:
\[
\gamma'_1 = (\cosh(t_1), \sinh(t_1),0, 1)\,;  \qquad \gamma'_2 = (0, -1, \cosh(t_2), -\sinh(t_2))\,.
\]
Denote by $\M$ the surface in $\RR^4_2$  defined by the pair $(\gamma_1,\gamma_2)$ in accordance with \eqref{MinSurf-NullCurves_R42},  or equivalently, $\M$ is the surface 
that corresponds to $(\M_a,\M_b)$ according to \eqref{MinLorSurf_R42-MinLorSurf_R31}. We have:
\[
  \M:\;  \x(t_1,t_2) = \frac{1}{2}\big(\sinh(t_1),\; \cosh(t_1)-t_2,\; \sinh(t_2),\; t_1-\cosh(t_2)\big)\,; \quad  t_1+t_2 \neq 0\,.
\]
By direct calculations we get  $(\gamma''_1)^2=(\gamma''_2)^2=1$\,. 
Hence, the surface $\M$ is of first type according to Definition \ref{Def-MinSurf_kind123-IsoCurvs}\,.

Using \eqref{fgh_natparm_R42} we find the functions $f_1$ and $f_2$  that correspond to $(\gamma'_1,\gamma'_2)$:
$$
	f_1=\frac{1}{2}\ee^{-t_1};  \qquad      f_2=\frac{1}{2} \ .
$$ 
Now, using  \eqref{K_kappa-fgh_MinLorSurf_R42} we obtain the Gauss curvature $K$ and the normal curvature $\varkappa$ of $\M$:
\begin{equation*}\label{K_kappa_Caten_R}
K         =   \frac{  -4 \cosh(t_1) \cosh(t_2)}{(\sinh(t_1) + \sinh(t_2))^3}\,; \qquad
\varkappa =   \frac{ 4-4 \cosh(t_1) \cosh(t_2)}{(\sinh(t_1) + \sinh(t_2))^3}\,.
\end{equation*}

\end{example}

\vskip 3mm
It will be interesting to study the following question: \emph{If we are given two minimal Lorentz surfaces in $\RR^3_1$,
how do the geometric properties of these two surfaces determine the
geometric properties of the corresponding surface in $\RR^4_2$?}

%==============================================================================

\vskip 5mm 
\textbf{Acknowledgments:}
The  second and the third authors are partially supported by the National Science Fund, Ministry of Education and Science of Bulgaria under contract KP-06-N52/3.

\vskip 3mm

%==============================================================================

%\bibliographystyle{plain}
%\bibliographystyle{plainurl}
\bibliographystyle{plainnat}

\bibliography{MyBibliography}

\begin{thebibliography}{24}
\providecommand{\natexlab}[1]{#1}
\providecommand{\url}[1]{\texttt{#1}}
\expandafter\ifx\csname urlstyle\endcsname\relax
  \providecommand{\doi}[1]{doi: #1}\else
  \providecommand{\doi}{doi: \begingroup \urlstyle{rm}\Url}\fi

\bibitem[Aledo et~al.(2006)Aledo, Espinar, and G\'alvez]{Al-Esp-Gal}
J.~A. Aledo, J.~M. Espinar, and J.~A. G\'alvez.
\newblock Timelike surfaces in the {Lorentz}–{Minkowski} space with
  prescribed {Gaussian} curvature and {Gauss} map.
\newblock \emph{Journal of Geometry and Physics}, 56\penalty0 (8):\penalty0
  1357--1369, 2006.
\newblock \doi{10.1016/j.geomphys.2005.07.004}.

\bibitem[Anciaux(2011)]{Anciaux-1}
H.~Anciaux.
\newblock \emph{Minimal Submanifolds in Pseudo-{Riemannian} Geometry}.
\newblock World Scientific Publishing Co. Pte. Ltd., 2011.
\newblock ISBN 978-981-4291-24-8.

\bibitem[Bayard and Patty(2015)]{Bay-Patty}
P.~Bayard and V.~Patty.
\newblock Spinor representation of {Lorentzian} surfaces in {${\mathbb
  R}^{2,2}$}.
\newblock \emph{J. Geom. Phys.}, 95:\penalty0 74--95, 2015.
\newblock \doi{10.1016/j.geomphys.2015.05.002}.

\bibitem[Chaves et~al.(2011)Chaves, Dussan, and Magid]{Ch-D-Mag}
R.~M.~B. Chaves, M.~P. Dussan, and M.~Magid.
\newblock {Bj\"orling} problem for timelike surfaces in the
  {Lorentz}-{Minkowski} space.
\newblock \emph{Math. Anal. Appl.}, 377\penalty0 (2):\penalty0 481--494, 2011.
\newblock \doi{10.1016/j.jmaa.2010.10.076}.

\bibitem[Chen(2011)]{Chen-1}
B.-Y. Chen.
\newblock Classification of minimal {Lorentz} surfaces in indefinite space
  forms with arbitrary codimension and arbitrary index.
\newblock \emph{Publ. Math. Debrecen}, 78\penalty0 (2):\penalty0 485--503,
  2011.
\newblock \doi{10.5486/PMD.2011.4860}.

\bibitem[Cintra and Onnis(2018)]{Cintra-Onnis}
A.~A. Cintra and I.~I. Onnis.
\newblock {Enneper} representation of minimal surfaces in the three-dimensional
  {Lorentz} - {Minkowski} space.
\newblock \emph{Annali di Matematica}, 197\penalty0 (1):\penalty0 21--39, 2018.
\newblock \doi{10.1007/s10231-017-0666-z}.

\bibitem[Duggal and Jin(2007)]{Duggal-Jin}
K.~Duggal and D.~H. Jin.
\newblock \emph{Null curves and hypersurfaces of semi-{Riemannian} manifolds}.
\newblock World Scientific, 2007.
\newblock ISBN 978-981-3106-97-0.

\bibitem[Dussan and Magid(2013)]{D-Mag}
M.~P. Dussan and M.~Magid.
\newblock The {Bj\"orling} problem for timelike surfaces in {${\mathbb
  R}^4_2$}.
\newblock \emph{J. Geom. Phys.}, 73:\penalty0 187--199, 2013.
\newblock \doi{10.1016/j.geomphys.2013.06.004}.

\bibitem[Dussan et~al.(2017)Dussan, Filho, and Magid]{D-Fil-Mag}
M.~P. Dussan, A.~P.~F. Filho, and M.~Magid.
\newblock The {Bj\"orling} problem for timelike minimal surfaces in {${\mathbb
  R}^4_1$}.
\newblock \emph{Annali di Matematica}, 196\penalty0 (4):\penalty0 1231--1249,
  2017.
\newblock \doi{10.1007/s10231-016-0614-3}.

\bibitem[G\'alvez et~al.(2003)G\'alvez, Mart{{\'\i}}nez, and
  Mil\'an]{Gal-Mart-Mil}
J.~A. G\'alvez, A.~Mart{{\'\i}}nez, and A.~Mil\'an.
\newblock Complete constant {Gaussian} curvature surfaces in the {Minkowski}
  space and harmonic diffeomorphisms onto the hyperbolic plane.
\newblock \emph{Tohoku Math. J. (2)}, 55\penalty0 (4):\penalty0 467--476, 2003.
\newblock \doi{10.2748/tmj/1113247124}.

\bibitem[Ganchev and Kanchev(2014)]{Kanchev2014}
G.~Ganchev and K.~Kanchev.
\newblock Explicit solving of the system of natural {PDE}{'}s of minimal
  surfaces in the four-dimensional {Euclidean} space.
\newblock \emph{Comptes Rendus de L{'}Academie Bulgare des Sciences},
  67\penalty0 (5):\penalty0 623--628, 2014.

\bibitem[Ganchev and Kanchev(2017)]{Kanchev2017}
G.~Ganchev and K.~Kanchev.
\newblock Explicit solving of the system of natural {PDE}{'}s of minimal
  space-like surfaces in {Minkowski} space-time.
\newblock \emph{Comptes Rendus de L{'}Academie Bulgare des Sciences},
  70\penalty0 (6):\penalty0 761--768, 2017.

\bibitem[Ganchev and Kanchev(2019)]{Kanchev2019}
G.~Ganchev and K.~Kanchev.
\newblock Relation between the maximal space-like surfaces in $\mathbb{R}^4_2$
  and the maximal space-like surfaces in $\mathbb{R}^3_1$.
\newblock \emph{Comptes Rendus de L{'}Academie Bulgare des Sciences},
  72\penalty0 (6):\penalty0 711--719, 2019.
\newblock \doi{10.7546/CRABS.2019.06.02}.

\bibitem[Ganchev and Kanchev(2020)]{Kanchev2020}
G.~Ganchev and K.~Kanchev.
\newblock Canonical coordinates and natural equations for minimal time-like
  surfaces in $\mathbb{R}^4_2$.
\newblock \emph{Kodai Mathematical Journal}, 43\penalty0 (3):\penalty0
  524--572, 2020.
\newblock \doi{10.2996/kmj/1605063628}.

\bibitem[Hoffman and Osserman(1980)]{H-O-1}
D.~Hoffman and R.~Osserman.
\newblock The geometry of the generalized {Gauss} map.
\newblock \emph{Memoirs of the American Mathematical Society}, 28\penalty0
  (236), 1980.
\newblock \doi{10.1090/memo/0236}.

\bibitem[Kanchev et~al.(2022)Kanchev, Kassabov, and
  Milousheva]{Krasimir_Kanchev_2022126017}
K.~Kanchev, O.~Kassabov, and V.~Milousheva.
\newblock Explicit solving of the system of natural {PDEs} of minimal {Lorentz}
  surfaces in $\mathbb{R}^4_2$.
\newblock \emph{Journal of Mathematical Analysis and Applications},
  510\penalty0 (1):\penalty0 126017, 2022.
\newblock ISSN 0022-247X.
\newblock \doi{10.1016/j.jmaa.2022.126017}.

\bibitem[Kassabov and Milousheva(2020)]{K-M-1}
O.~Kassabov and V.~Milousheva.
\newblock {Weierstrass} representations of {Lorentzian} minimal surfaces in
  {${\mathbb R}^4_2$}.
\newblock \emph{Mediterr. J. Math.}, 17\penalty0 (6, 199), 2020.
\newblock \doi{10.1007/s00009-020-01636-x}.

\bibitem[Konderak(2005)]{Kond}
J.~Konderak.
\newblock A {Weierstrass} representation theorem for {Lorentz} surfaces.
\newblock \emph{Complex Var. Theory Appl.}, 50\penalty0 (5):\penalty0 319--332,
  2005.
\newblock \doi{10.1080/02781070500032895}.

\bibitem[Lira et~al.(2011)Lira, Melo, and Mercuri]{Lira-Melo-Merc}
J.~H. Lira, M.~Melo, and F.~Mercuri.
\newblock A {Weierstrass} representation for minimal surfaces in 3-dimensional
  manifolds.
\newblock \emph{Results. Math.}, 60:\penalty0 311–323, 2011.
\newblock \doi{10.1007/s00025-011-0169-y}.

\bibitem[Magid(1991)]{Mag}
M.~A. Magid.
\newblock Timelike surfaces in {Lorentz} 3-space with prescribed mean curvature
  and {Gauss} map.
\newblock \emph{Hokkaido Math. J.}, 20\penalty0 (3):\penalty0 447--464, 1991.
\newblock \doi{10.14492/hokmj/1381413979}.

\bibitem[Patty(2016)]{Patty}
V.~Patty.
\newblock A generalized {Weierstrass} representation of {Lorentzian} surfaces
  in {${\mathbb R}^{2,2}$} and applications.
\newblock \emph{Methods Mod. Phys.}, 13\penalty0 (6, 1650074), 2016.
\newblock \doi{10.1142/S0219887816500742}.

\bibitem[Shipman et~al.(2017)Shipman, Shipman, and Packard]{Ship-Pack}
B.~A. Shipman, P.~D. Shipman, and D.~Packard.
\newblock Generalized {Weierstrass}–{Enneper} representations of {Euclidean},
  spacelike, and timelike surfaces: a unified {Lie}-algebraic formulation.
\newblock \emph{J. Geom.}, 108\penalty0 (2):\penalty0 545–563, 2017.
\newblock \doi{10.1007/s00022-016-0358-7}.

\bibitem[Torres~del Castillo(2010)]{TdCastillo-1}
G.~F. Torres~del Castillo.
\newblock \emph{Spinors in Four-Dimensional Spaces}.
\newblock Birkh{\"a}user Basel, 2010.
\newblock ISBN 978-0-8176-4983-8.

\bibitem[Vessiot(1905)]{Vessiot1905}
E.~Vessiot.
\newblock Sur les courbes minima.
\newblock \emph{C. R. Acad. Sci. Paris}, 140:\penalty0 1381--1384, 1905.

\end{thebibliography}

\end{document}